\numberwithin{equation}{section}
\def\pa{\partial}
\newcommand{\qtq}[1]{\quad\text{#1}\quad}
\newcommand{\tz}{2^{\Z}}
\newcommand{\po}{P^a}
\newcommand{\tpo}{\tilde{P}^a}
\newcommand{\lpo}{L^p(\R^d)}
\DeclareMathOperator*{\diam}{diam}
\DeclareMathOperator*{\supp}{supp}
\newcommand{\R}{\mathbb{R}}
\newcommand{\C}{\mathbb{C}}
\newcommand{\Z}{\mathbb{Z}}
\newtheorem{theorem}{Theorem}[section]
\newtheorem{lemma}[theorem]{Lemma}
\newtheorem{proposition}[theorem]{Proposition}
\newcommand{\la}{\lambda}
\theoremstyle{definition}
\theoremstyle{remark}
\begin{document}
\title[Schr\"odinger operator with inverse-square potential]{Sobolev spaces adapted to\\the Schr\"odinger operator\\with inverse-square potential}

\author{R. Killip}
\address{Department of Mathematics, UCLA}
\email{killip@math.ucla.edu}

\author{C. Miao}
\address{Institute of Applied Physics and Computational Mathematics, Beijing 100088}
\email{miao\_changxing@iapcm.ac.cn}

\author{M. Visan}
\address{Department of Mathematics, UCLA}
\email{visan@math.ucla.edu}

\author{J. Zhang}
\address{Department of Mathematics, Beijing Institute of Technology, Beijing 100081}
\email{zhangjunyong111@sohu.com}

\author{J. Zheng}
\address{Universit\'e Nice Sophia-Antipolis, 06108 Nice Cedex 02, France}
\email{zhengjiqiang@gmail.com, zheng@unice.fr}

\begin{abstract} We study the $L^p$-theory for the Schr\"odinger operator $\mathcal L_a$ with inverse-square potential $a|x|^{-2}$.  Our main result describes when $L^p$-based Sobolev spaces defined in terms of the operator $(\mathcal L_a)^{s/2}$ agree with those defined via $(-\Delta)^{s/2}$.  We consider all regularities $0<s<2$.

In order to make the paper self-contained, we also review (with proofs) multiplier theorems, Littlewood--Paley theory, and Hardy-type inequalities associated to the operator $\mathcal L_a$.
\end{abstract}

\maketitle

 \begin{center}
 \begin{minipage}{100mm}
   { \small {\bf Key Words:}  Riesz transforms, inverse-square potential; Littlewood--Paley theory; Mikhlin Multiplier Theorem; heat kernel estimate.
      {}
   }\\
    { \small {\bf AMS Classification:}
      {35P25,  35Q55.}
      }
 \end{minipage}
 \end{center}

\section{Introduction}
In this paper, we will discuss several basic harmonic analysis questions related to the Schr\"odinger operator
\begin{equation}\label{ISP}
\mathcal{L}_a= -\Delta  + \tfrac{a}{|x|^2} \quad\text{with}\quad a\geq -\big(\tfrac{d-2}2\big)^2
\end{equation}
in dimensions $d\geq 3$.  More precisely, we interpret $\mathcal{L}_a$ as the Friedrichs extension of this operator defined initially on $C^\infty_c(\R^d\setminus\{0\})$.
We discuss this more fully in subsection~\ref{SS:La}.   The restriction $a\geq -(\tfrac{d-2}2)^2$ ensures that the operator $\mathcal{L}_a$ is positive semi-definite.

For much of what follows, it is convenient to introduce a different parameterization of the family of operators $\mathcal L_a$, namely, via
\begin{equation}\label{E:sigma defn}
\sigma:=\tfrac{d-2}2- \tfrac12\sqrt{(d-2)^2+4a} .
\end{equation}
This has the opposite sign to $a$ and ranges over $(-\infty,\frac{d-2}{2}]$ as $a$ ranges from $+\infty$ to $-(\tfrac{d-2}2)^2$.  The underlying significance of $\sigma$
will become apparent below.

The operator $\mathcal L_a$ arises frequently in mathematics and physics, commonly as a scaling limit of more complicated problems.  Several instances where
this occurs in physics are discussed in the mathematical papers \cite{BPST1,BPST2,KSWW,VZ,ZZ}; they range from combustion theory to the Dirac equation with Coulomb potential, and to the study of perturbations of classic space-time metrics such as Schwarzschild and Reissner--Nordstr\"om.

The appearance of $\mathcal L_a$  as a scaling limit (both microscopically and astronomically) is a signal of one of its unique properties: $\mathcal L_a$ is scale invariant.  In particular, the potential and Laplacian can be regarded as equally strong at every length scale.  Correspondingly, problems involving $\mathcal L_a$ are seldom amenable to simple perturbative arguments.  This is one of the reasons that we (and indeed many before us) have singled out this particular operator for in-depth study.

Our main motivation for developing harmonic analysis tools for $\mathcal L_a$, however, comes from problems in dispersive PDEs.  Specifically, the study of large-data problems at the (scaling-)critical regularity and the development of sharp scattering thresholds for focusing problems.  A key principle in attacking such problems in that one must first study the effective equations in various limiting regimes.  Given the (possibly broken, but still intrinsic) scale and space-translation invariance of such problems, one must accept that solutions may live at any possible length scale, as well as at any spatial location.  Naturally, taking a scaling limit results in a scale invariant problem.  Thus, the broad goal of demonstrating well-posedness of critical problems in general geometries rests on a thorough understanding of limiting problems, such as that associated to the inverse-square potential.  The harmonic analysis tools developed in this paper have been essential to \cite{KMVZZ:NLS}, which considers global well-posedness and scattering for both the defocusing and focusing energy-critical NLS with inverse-square potential, and to \cite{KMVZ:cubic}, in which sharp thresholds for well-posedness and scattering are established for the focusing cubic NLS with inverse-square potential.

The spectral theorem allows one to define functions of the operator $\mathcal{L}_a$ and provides a simple necessary and sufficient condition for their boundedness on $L^2$.  A sufficient condition for functions of an operator to be $L^p$-bounded is a basic prerequisite for the modern approach to many PDE problems.  In the setting of constant-coefficient differential operators, this role is played by the classical Mikhlin Multiplier Theorem.  The analogue for our operator is known:

\begin{theorem}[Mikhlin Multipliers]\label{thm:M} Suppose $m:~[0,\infty)\to\C$ satisfies
\begin{equation}\label{E:MikhlinCond}
\big|\partial^jm(\la)\big|\lesssim\la^{-j}\quad\text{for all} \quad j\geq 0
\end{equation}
and that either\\[1mm]
\hspace*{0.5em}$\bullet$\ $a\geq 0$ and $1<p<\infty$, or\\[1mm]
\hspace*{0.5em}$\bullet$\ $-(\frac{d-2}2)^2\leq a<0$ and $r_0<p<r_0':=\tfrac{d}\sigma$.\\[1mm]
Then $m(\sqrt{\mathcal L_a})$ extends to a bounded operator on $L^p(\R^d)$.
\end{theorem}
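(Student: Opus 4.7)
My plan is to deduce $L^p$ boundedness of $m(\sqrt{\mathcal L_a})$ from heat kernel estimates for the semigroup $e^{-t\mathcal L_a}$, and then invoke the abstract spectral multiplier theorems of Hebisch and Blunck--Kunstmann. The spectral theorem immediately gives $\|m(\sqrt{\mathcal L_a})\|_{L^2\to L^2}\leq\|m\|_\infty$ from the $j=0$ case of \eqref{E:MikhlinCond}, so the task reduces to an $L^p$ bound on the stated range, obtained from kernel estimates, interpolation, and duality.

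The essential input is heat kernel bounds. When $a\geq 0$ the potential is nonnegative, so the Feynman--Kac formula (equivalently, the Trotter product formula) yields the free Gaussian domination $0\leq e^{-t\mathcal L_a}(x,y)\leq(4\pi t)^{-d/2}e^{-|x-y|^2/(4t)}$. When $-(\tfrac{d-2}2)^2\leq a<0$ this fails near the origin, and one has instead the weighted bound of Liskevich--Sobol and Milman--Semenov,
\[
|e^{-t\mathcal L_a}(x,y)|\lesssim t^{-d/2}\Bigl(1+\tfrac{\sqrt t}{|x|}\Bigr)^{\!\sigma}\Bigl(1+\tfrac{\sqrt t}{|y|}\Bigr)^{\!\sigma}e^{-c|x-y|^2/t},
\]
which reflects the $|x|^{-\sigma}$ singularity of the ground state.

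With these heat kernel bounds in hand, the theorem follows from standard abstract machinery. For $a\geq0$, the Gaussian bound combined with the finite propagation speed of the wave flow $\partial_t^2+\mathcal L_a$ (which holds because $\mathcal L_a$ has the same principal symbol as $-\Delta$) places us in the classical Hebisch framework, yielding $L^p$ for every $1<p<\infty$ via a Calder\'on--Zygmund decomposition. For $a<0$ the absence of a Gaussian upper bound forces us instead into the Blunck--Kunstmann framework, which replaces the Gaussian hypothesis by off-diagonal $L^{r_0}$--$L^{r_0'}$ estimates for $e^{-t\mathcal L_a}$; these follow from the weighted bound above by a weighted Young-type computation, and yield the multiplier theorem exactly on the range $r_0<p<r_0'$.

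The main obstacle is the negative-potential case. The range is sharp: the local weight $|x|^{-\sigma}$ fails to lie in $L^p_{\mathrm{loc}}$ once $p\geq d/\sigma=r_0'$, so even the free semigroup $e^{-t\mathcal L_a}$ is not bounded on $L^p$ outside this range. The delicate point will be verifying the Blunck--Kunstmann hypotheses carefully: one must track how the singular weights $(\sqrt t/|x|)^\sigma$ interact with the Littlewood--Paley-type decompositions and finite-speed-of-propagation arguments underlying the proof, and check that the interpolation which extends the single $L^{r_0}$ endpoint to the full range $r_0<p<r_0'$ does not lose endpoints.
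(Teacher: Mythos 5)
Your plan --- pass from the Liskevich--Sobol / Milman--Semenov heat kernel bounds to the abstract spectral multiplier machinery of Hebisch (for $a\geq 0$) and Blunck (for $a<0$) --- is a legitimate route, and the paper itself acknowledges it: \cite{Blunck} and \cite{COSY} are cited as verifying the theorem for $a>-(\frac{d-2}2)^2$, with \cite[Theorem~1.1]{Blunck} also covering the endpoint $a=-(\frac{d-2}2)^2$ once one checks that Theorem~\ref{T:heat} yields the generalized Gaussian estimates (GGE) on the appropriate exponent range. But the paper deliberately does \emph{not} take that route; it gives a self-contained proof of Theorem~\ref{thm:multthm} for $a<0$ that rebuilds the abstract argument in this concrete setting: a Calder\'on--Zygmund decomposition of $|f|^q$ at height $h^q$, a further splitting $b_k=g_k+\tilde b_k$ using the heat semigroup smoother $(1-e^{-r_k^2\mathcal L_a})^\mu$, an $L^2$ bound for $\sum_k g_k$ read off from Theorem~\ref{T:heat}, and a decomposition $a(\lambda)=a_1(\lambda)+a_2(\lambda)$ of the modified multiplier in which $a_1$ has compactly supported Fourier transform (so its kernel is localized by finite speed of propagation of the wave flow) and $a_2$ is controlled by Taylor expansion plus the concrete $L^q\to L^2$ smoothing estimate \eqref{I}. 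The self-contained approach buys an explicit, checkable argument with a quantified derivative count \eqref{est:multthm} and no appeal to abstract hypotheses; your approach buys brevity, but the hard step --- verifying the off-diagonal $L^{r_0}\to L^{r_0'}$ (GGE) hypothesis from the singular weights $(1\vee\sqrt t/|x|)^\sigma$, and confirming that Blunck's version (rather than \cite{COSY}) applies at $a=-(\frac{d-2}2)^2$ --- is asserted (``a weighted Young-type computation'') rather than carried out, and that verification is precisely where the content lies.
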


In the case $a\geq 0$, the operator $\mathcal{L}_a$ obeys Gaussian heat kernel bounds and so the result follows from general results covering this class of operators; see, for example, \cite{Alex,Hebisch}.  When $-(\frac{d-2}2)^2\leq a<0$, the heat kernel for $\mathcal{L}_a$ no longer obeys Gaussian bounds; indeed, the heat kernel is singular at the origin (see Theorem~\ref{T:heat}).  In fact, as has been observed previously, the singularity is sufficiently bad that Mikhlin-type multipliers are \emph{unbounded} outside the range of $p$ given in Theorem~\ref{thm:M}; we give the simple details in Section~\ref{S:MM}.  Thus, the stated range of $p$ is sharp, including the question of endpoints.

Several authors have developed general multiplier theorems under weaker hypotheses on the heat kernel.  In particular, the papers \cite{Blunck, COSY} explicitly verify that the multiplier theorems proved therein apply to the operators $\mathcal{L}_a$ for $a>-(\frac{d-2}2)^2$ and so verify Theorem~\ref{thm:M} in these cases.  In fact, \cite[Theorem~1.1]{Blunck} also yields Theorem~\ref{thm:M} in the endpoint case $a=-(\frac{d-2}2)^2$; to see this, one need only observe that the known heat kernel estimates (reproduced below in Theorem~\ref{T:heat}) guarantee that $\mathcal L_a$ obeys hypothesis GGE from \cite{Blunck} with exponent $p_o$ if and only if $\frac{d}{\sigma} < p_o \leq 2$.

Nevertheless, we have elected to include here a proof of Theorem~\ref{thm:M} for the more difficult regime $-(\frac{d-2}2)^2\leq a<0$.  We do this for two principal reasons: (i) This result is a key foundation for what is done in this paper. (ii) The precise formulation of Theorem~\ref{thm:M} permits us to give a simpler and more self-contained proof than those of the papers cited above.  Indeed, we are dealing with a concrete operator acting on $\R^d$.  Moreover (and more significantly), for applications to dispersive PDE, the Mikhlin condition \eqref{E:MikhlinCond} suffices.  By comparison, the papers cited above tackle the deep and difficult problem of finding minimal regularity H\"ormander-type conditions under which multiplier operators are bounded.

The argument we give to verify Theorem~\ref{thm:M} is strongly influenced by \cite{Alexopoulos,CGT:finite,SikoraWright}.  The argument only requires bounds on a concrete number of derivates of the multiplier (see Theorem~\ref{thm:multthm}); however, in the interests of clarity and simplicity, we have made no effort to optimize this number.

The square function estimates of Littlewood--Paley follow as an immediate corollary of the multiplier theorem via the usual randomization argument.  Such estimates are an invaluable tool since they allow one to analyze problems one length scale at a time and then reassemble the pieces.

It is convenient for us to consider two kinds of Littlewood--Paley projections: one defined via a $C^\infty_c((0,\infty))$ multiplier and another defined as a difference of heat kernels. See Section~\ref{SS:LP} for more details.  The former notion more closely matches modern expositions of the classical translation-invariant theory, while the heat kernel version allows one to exploit heat-kernel bounds (and the semigroup property) to prove estimates.  There is no great cost associated to either choice, since the multiplier theorem permits one to readily pass back and forth between the two notions.  An example of this in action can be found in the proof of Bernstein inequalities; see Lemma~\ref{L:Bernie}.

One important application of the traditional Littlewood--Paley theory is the proof of Leibniz (=product) and chain rules for
differential operators of non-integer order.  For example, if $1<p<\infty$ and $s>0$, then
$$
\| f g \|_{H^{s,p}(\R^d)} \lesssim \| f \|_{H^{s,p_1}(\R^d)} \| g \|_{L^{p_2}(\R^d)} + \| f \|_{L^{p_3}(\R^d)}\| g \|_{H^{s,p_4}(\R^d)}
$$
whenever $\frac1p=\frac1{p_1}+\frac1{p_2}=\frac1{p_3}+\frac1{p_4}$.  For a textbook presentation of these theorems and original references, see \cite{Taylor:Tools}.

Rather than pursue a direct proof of such inequalities, we will prove a result that allows one to deduce such results directly from their Euclidean counterparts.  This is the main result of this paper:

\begin{theorem}[Equivalence of Sobolev norms]\label{thm:equivsobolev} Suppose $d\geq 3$, $a\geq -(\tfrac{d-2}2)^2$, and $0<s<2$.
If $1<p<\infty$ satisfies $\frac{s+\sigma}d<\frac1p<\min\{1, \frac{d-\sigma}d\}$, then
\begin{equation}\label{RieszT}
\big\|(-\Delta)^\frac{s}2f\big\|_{L^p}\lesssim_{d,p,s}\big\|\mathcal L_a^\frac{s}2 f\big\|_{L^p} \qtq{for all} f\in C_c^\infty(\R^d).
\end{equation}
If $\max\{\frac sd, \frac\sigma d\}<\frac1p<\min\{1, \frac{d-\sigma}d\}$, which ensures already that $1<p<\infty$, then
\begin{equation}\label{RieszT'}
\big\|\mathcal L_a^\frac{s}2f\big\|_{L^p}\lesssim_{d,p,s}\big\|(-\Delta)^\frac{s}2f\big\|_{L^p}\qtq{for all} f\in C_c^\infty(\R^d).
\end{equation}
\end{theorem}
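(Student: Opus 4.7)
The plan is to recast both inequalities as $L^p$-boundedness of concrete operators. Via the substitutions $g=\mathcal{L}_a^{s/2}f$ in \eqref{RieszT} and $g=(-\Delta)^{s/2}f$ in \eqref{RieszT'} (which require a separate density check for $C_c^\infty(\R^d)$, handled via Mikhlin-based Littlewood--Paley truncations from Theorem~\ref{thm:M}), the estimates reduce to showing that
\[
T_s:=(-\Delta)^{s/2}\mathcal{L}_a^{-s/2} \qtq{and} S_s:=\mathcal{L}_a^{s/2}(-\Delta)^{-s/2}
\]
extend to bounded operators on $L^p(\R^d)$ for the respective ranges of $p$.

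The central ingredient is a pointwise kernel estimate for $\mathcal{L}_a^{-s/2}$, derived via the subordination identity
\[
\mathcal{L}_a^{-s/2}(x,y) = \frac{1}{\Gamma(s/2)}\int_0^\infty t^{s/2-1}\, e^{-t\mathcal{L}_a}(x,y)\,dt
\]
together with the sharp heat-kernel bound from Theorem~\ref{T:heat}. Splitting the $t$-integral at $t\sim|x-y|^2$ should yield
\[
\mathcal{L}_a^{-s/2}(x,y) \lesssim |x-y|^{-(d-s)}\Bigl(1+\tfrac{|x-y|}{|x|}\Bigr)^{\sigma}\Bigl(1+\tfrac{|x-y|}{|y|}\Bigr)^{\sigma},
\]
i.e.\ the classical Riesz-potential kernel dressed by weights that record the singular behavior of $\mathcal{L}_a$-eigenfunctions at the origin.

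For \eqref{RieszT} I would then write
\[
T_s = I + (-\Delta)^{s/2}\bigl(\mathcal{L}_a^{-s/2} - (-\Delta)^{-s/2}\bigr),
\]
exploiting the fact that $(-\Delta)^{s/2}$ inverts the Riesz potential; the difference $\mathcal{L}_a^{-s/2}-(-\Delta)^{-s/2}$ has a better-behaved kernel, and applying $(-\Delta)^{s/2}$ in its singular-integral form (kernel $|x-z|^{-(d+s)}$, split into near and far regimes to harvest cancellation) produces a kernel for the correction term that is amenable to a Schur-type estimate in weighted Lebesgue spaces, with the weights absorbed through the Hardy-type inequalities recalled in Section~\ref{SS:hardy}. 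Inequality \eqref{RieszT'} is proved in parallel: starting from the explicit kernel of $(-\Delta)^{-s/2}$, one represents $\mathcal{L}_a^{s/2}$ (with $0<s/2<1$) through the Balakrishnan formula
\[
\mathcal{L}_a^{s/2}h = \frac{s/2}{\Gamma(1-s/2)}\int_0^\infty t^{-s/2-1}\bigl(h - e^{-t\mathcal{L}_a}h\bigr)\,dt
\]
and estimates the kernel of $S_s$ via the same heat-kernel-driven splitting.

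The main obstacle will be the delicate bookkeeping of the singular radial weights $|x|^{\pm\sigma}$ at the origin; these interact with the Riesz factor $|x-y|^{-(d-s)}$, and a naive Young/Schur estimate loses the sharp range of $p$. Matching the weights exactly against the Hardy inequality is what produces the asymmetric lower bounds $\tfrac{s+\sigma}d$ (for \eqref{RieszT}) and $\max\{\tfrac sd,\tfrac\sigma d\}$ (for \eqref{RieszT'}) on $\tfrac1p$, together with the common upper bound $\tfrac{d-\sigma}d$. The asymmetry reflects the different ways the weight $|x|^{-\sigma}$ attaches to the kernel of $\mathcal{L}_a^{-s/2}$ versus that of $\mathcal{L}_a^{s/2}$.
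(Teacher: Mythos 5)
Your plan correctly identifies the two key ingredients the paper also uses — a Riesz-kernel bound for $\mathcal L_a^{-s/2}$ coming from Theorem~\ref{T:heat} by subordination (this is exactly Lemma~\ref{lem:rke}), and the Hardy inequalities of Section~\ref{S:Hardy} to absorb the radial weights — and the endpoint bookkeeping you describe does track the true $p$-ranges. The reduction to showing that $T_s=(-\Delta)^{s/2}\mathcal L_a^{-s/2}$ and $S_s=\mathcal L_a^{s/2}(-\Delta)^{-s/2}$ are $L^p$-bounded is also correct in spirit.

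The gap is in the central step: you propose to write $T_s = I + (-\Delta)^{s/2}\bigl(\mathcal L_a^{-s/2}-(-\Delta)^{-s/2}\bigr)$ and then apply $(-\Delta)^{s/2}$ ``in its singular-integral form'' to the difference kernel $D(x,y):=\mathcal L_a^{-s/2}(x,y)-(-\Delta)^{-s/2}(x,y)$. But for $0<s<2$, the singular-integral realization of $(-\Delta)^{s/2}$ has a kernel of order $|x-z|^{-(d+s)}$, which is non-integrable; evaluating $(-\Delta_x)^{s/2}D(x,y)$ requires quantitative smoothness (H\"older, or up to first-order differentiability when $1\le s<2$) of $D(\cdot,y)$ near the diagonal. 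Theorem~\ref{T:heat} supplies only two-sided pointwise bounds on the heat kernel, not gradient or H\"older bounds, so the subordination formula does not hand you the regularity needed to make this step rigorous. The same obstacle arises for $S_s$: after the Balakrishnan formula you must control $(\delta-e^{-t\mathcal L_a})$ composed with $(-\Delta)^{-s/2}$, which again pushes regularity questions onto the heat kernel rather than just size. The paper circumvents exactly this difficulty by replacing the singular-integral picture with a Littlewood--Paley square function: both norms are first rewritten via Theorem~\ref{T:sq} in terms of the heat-semigroup projections $\tilde P_N$ and $\tilde P_N^a$, and then Proposition~\ref{prop:lpdiffer} controls the difference of the two square functions using only \emph{pointwise} bounds on $K_N=(\tilde P_N-\tilde P_N^a)(x,y)$ (Lemma~\ref{lem:tedke}, proved by Duhamel plus the maximum principle and heat-kernel bounds), after which the Hardy inequalities close the argument. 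That square-function reformulation is precisely the device that avoids needing derivative estimates on kernels; without it, or a substitute (heat-kernel gradient bounds, a $T(1)$-type argument, etc.), the singular-integral route you outline does not close.
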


Note that for the applications described above, both inequalities \eqref{RieszT} and \eqref{RieszT'} are equally important.  The crucial role of this result in the study of nonlinear Schr\"odinger equations with inverse-square potential is documented in \cite{KMVZZ:NLS,KMVZ:cubic}.  Here again, \eqref{RieszT} and \eqref{RieszT'} are equally important.  In the case of \cite{KMVZZ:NLS} it is also essential that Theorem~\ref{thm:equivsobolev} covers both the regimes where $s<1$ and $s>1$. 

When $s=1$, the estimate \eqref{RieszT} is known as the boundedness of Riesz transforms.  The case $s=1$ of \eqref{RieszT} was proved in
\cite{HassellLin}, excepting the endpoint case $a=-(\tfrac{d-2}2)^2$.  See also \cite{Assaad, Assaad-Ouhabaz} for prior partial results.  Moreover, the earlier paper \cite{HassellSikora} showed that for $s=1$, the range of $p$
stated above cannot be enlarged, even if one restricts $f$ to be spherically symmetric.  

When $p=2$, the equivalence at $s=1$ for $a>-(\tfrac{d-2}2)^2$ is an immediate consequence of the sharp Hardy inequality
\begin{equation}\label{SharpL2Hardy}
\bigl(\tfrac{d-2}2\bigr)^2 \int_{\R^d} \frac{|f(x)|^2}{|x|^2}\,dx \leq \int_{\R^d} |\nabla f(x)|^2\,dx,
\end{equation}
which itself is just a recapitulation of the fact that $\mathcal L_a\geq 0$ when $a\geq -(\tfrac{d-2}2)^2$.  Sharpness of the constant in \eqref{SharpL2Hardy} also shows that \eqref{RieszT} must fail when $p=2$, $s=1$, and $a=-(\tfrac{d-2}2)^2$.  This scenario coincides precisely with an excluded endpoint in Theorem~\ref{thm:equivsobolev}.

When $p=2$ and  $a > -(\tfrac{d-2}2)^2$, equivalence of Sobolev norms for $0\leq s\leq 1$ follows by complex interpolation from the endpoints $s=0$ and $s=1$.
This argument appears as  Proposition 1 in \cite{BPST1} and relies on bounds of imaginary powers of $\mathcal L_a$.  These follow from the spectral theorem in the $L^2$ setting, but are highly non-trivial for general $p$.  In \cite{SikoraWright} it is shown that Gaussian heat kernel bounds imply suitable bounds on imaginary powers.  This is used in \cite{ZZ} to prove an analogue of Theorem~\ref{thm:equivsobolev} for $a\geq 0$, albeit with a smaller set of exponents.

The proof of Theorem~\ref{thm:equivsobolev} follows a path laid down in \cite{KVZ12}:  The classical Mikhlin Multiplier Theorem together with Theorem~\ref{thm:M} allow us to reformulate the norms on either side of \eqref{RieszT} and \eqref{RieszT'} in terms of Littlewood--Paley square functions (cf. Theorem~\ref{T:sq}).  Here we use Littlewood--Paley projections formed in terms of the heat kernel, because this makes it easy to bound the difference between the kernel associated to the operator $\mathcal L_a$ and that associated to $-\Delta$.  By bounding the difference between these operators (see Lemma~\ref{lem:tedke}), we are ultimately lead to
\begin{align}\label{est:diff}
\biggl\|  \biggl(\sum_{N\in 2^{\Z}}N^{2s}\bigl|\tilde P_Nf\bigr|^2\biggr)^{\frac 12}
    - \biggl(\sum_{N\in 2^{\Z}}N^{2s}\bigl|\tilde P_N^a f\bigr|^2\biggr)^{\frac 12}\biggr\|_{L^p(\R^d)}
\lesssim \biggl\|\frac{f(x)}{|x|^s}\biggr\|_{L^p(\R^d)}.
\end{align}
This assertion is the content of Proposition~\ref{prop:lpdiffer}.  The proof of Theorem~\ref{thm:equivsobolev} is then completed by invoking the classical Hardy inequality for the Laplacian, as well as an analogue of this for the operator $\mathcal L_a$ that we prove in Proposition~\ref{P:hardy}.

This completes our description of the main topics of the paper.  The exact presentation is organized as follows.  This section contains two subsections.  In the first we review the definition of $\mathcal L_a$ and justify our choice of the Friedrichs extension.  In the second, we review some basic notation.  In Section~2,  we use the heat kernel estimate to derive bounds on the kernel of the Riesz potential. Section~3 is devoted to proving Hardy-type inequalities associated with $\mathcal L_a$.  We prove Theorem~\ref{thm:M} in Section~4. In Section~5, we develop a Littlewood--Paley theory in our setting.  We prove Theorem \ref{thm:equivsobolev} in Section~6.

\subsection{$\mathcal L_a$ as a self-adjoint operator.}\label{SS:La} Let $\mathcal L_a^\circ$ denote the natural action of $-\Delta+\smash[b]{\tfrac{a}{|x|^2}}$ on $C^\infty_c(\R^d\setminus \{0\})$.  Recall that we define $\mathcal L_a$ as the Friedrichs extension of $\mathcal L_a^\circ$.  The purpose of this subsection is to elaborate on the meaning and significance of this.  As elsewhere in the paper, we restrict attention to $d\geq 3$.

When $a\geq -(\tfrac{d-2}{2})^2$, the operator $\mathcal L_a^\circ$ is easily seen to be a positive semi-definite symmetric operator.  For example, positivity can seen via the factorization
$$
\mathcal{L}_a^\circ = \bigl(-\nabla + \sigma\tfrac{x}{|x|^2}\bigr) \cdot \bigl(\nabla + \sigma\tfrac{x}{|x|^2}\bigr)
    = -\Delta - \sigma (d-2-\sigma) \tfrac{1}{|x|^2},
$$
with $\sigma$ is as in \eqref{E:sigma defn}, which shows that for $\phi\in C^\infty_c(\R^d\setminus\{0\})$ we have
$$
\langle \phi, \mathcal{L}_a^\circ\phi\rangle = \int_{\R^d} \bigl| \nabla \phi(x) + \sigma\tfrac{x}{|x|^2} \phi(x) \bigr|^2\,dx \geq 0.
$$

The general theory of self-adjoint extensions now guarantees that there is a unique self-adjoint extension $\mathcal L_a$ of $\mathcal{L}_a^\circ$, whose form domain $Q(\mathcal L_a)=D(\sqrt{\mathcal L_a})\subseteq L^2(\R^d)$ is given by the completion of $C^\infty_c(\R^d\setminus\{0\})$ with respect to the norm
$$
||\phi||^2_{Q(\mathcal L_a)} = \int_{\R^d} \bigl| \nabla \phi \bigr|^2 + \bigl(1 + \tfrac{a}{|x|^2}\bigr) \bigl|\phi\bigr|^2\,dx
    = \int_{\R^d} \bigl| \nabla \phi + \tfrac{\sigma x}{|x|^2} \phi \bigr|^2 + \bigl|\phi\bigr|^2\,dx.
$$
This extension is known as the Friedrichs extension; it is also positive semi-definite.  No other self-adjoint extension has domain contained inside $Q(\mathcal L_a)$.  In this sense (see also below), the Friedrichs extension is singled out as having the smallest and hence least singular quadratic form domain.  For further details, see \cite[\S X.3]{RSII}.

When $a>-(\tfrac{d-2}{2})^2$, the sharp Hardy inequality \eqref{SharpL2Hardy} shows that $Q(\mathcal L_a)=H^1(\R^d)$.  When $a=-(\tfrac{d-2}{2})^2$, the Hardy inequality still guarantees $Q(\mathcal L_a)\supseteq H^1(\R^d)$; however, the reverse inclusion fails.  To see this failure, one need only consider a function $u(x)$ that is compactly supported, smooth except at the origin, and obeys
$$
u(x) = |x|^{-\frac{d-2}{2}}[\log(1/|x|)]^{-\frac{1}{2}} \qtq{for} |x| < \tfrac12.
$$
This is the first of many subtleties associated with the endpoint case.

We are able to give a seamless treatment of the endpoint case in this paper for two main reasons. First, unlike the Green's function, for example, the structure of the heat kernel is insensitive to a zero-energy resonance (in the form of an incipient eigenvalue).  The difference between the Green's function and the heat kernel is apparent even when looking at the case $a= 0$ as the dimension varies; indeed, the heat kernel varies coherently with dimension, while the Green's function has logarithmic terms in two dimensions (a signal of the zero-energy resonance in that case).  By basing our arguments around the heat kernel, we avoid these peculiarities.

The second key factor in our treatment of the endpoint case is the fact that precise estimates for the heat kernel have already been verified in this case.  This is a result of \cite{MS} that we reproduce here in Theorem~\ref{T:heat}.  For further discussion of the peculiarities and subtleties of the heat equation in the endpoint case see \cite{VZ}, as well as the references therein.

As the operator $\mathcal L_a^\circ$ is spherically symmetric, we may decompose it as a direct sum over spherical harmonics.  This is very instructive for understanding the structure of solutions and the possibility of alternate self-adjoint extensions.  To this end, we employ polar coordinates in the form $x=r\omega$ with $0\leq r<\infty$ and $\omega\in S^{d-1}$.

Consider now functions of the form $f(r) Y_\ell(\omega)$ with $f\in C^\infty_c((0,\infty))$ and $Y_\ell$ a spherical harmonic of degree $\ell\geq 0$.  The action of $\mathcal L_a^\circ$ on such functions is equivalent to the action of the Bessel operator
$$
f(r) \mapsto -\tfrac{d^2\;}{dr^2}f(r)-\tfrac{d-1}{r}\tfrac{d\;}{dr}f(r)+\tfrac{a+\ell(\ell+d-2)}{r^2}f(r)
$$
on the radial factor $f(r)$.  This is a symmetric positive semi-definite operator with respect to the $L^2(r^{d-1}\,dr)$ inner product.

An alternate form of the Bessel operator,
\begin{equation}\label{LioudBessel}
g(r)\mapsto -g''(r) + \tfrac{4\nu^2-1}{4r^2} g(r) \qtq{with} \nu^2 = \bigl(\tfrac{d-2}{2}\bigr)^2 + a + \ell(\ell+d-2),
\end{equation}
arises by employing the Liouville transformation $g(r)=r^{(d-1)/2} f(r)$.  This is a symmetric operator with respect to the $L^2(dr)$ inner product.  Note also that the condition $a\geq - (\tfrac{d-2}{2})^2$ guarantees that $\nu^2 \geq 0$.  We choose $\nu$ as the positive root.

The spectral theory of the Bessel equation has been extensively studied, typically in the form \eqref{LioudBessel}; see, for example, \cite{AkhGlaz,Titch}.  The operator is always limit point at infinity (in the sense of Weyl); this is a trivial consequence of the fact that the potential is bounded near infinity.  On the other hand, the operator is limit point at the origin if and only if $\nu\geq 1$.  To see this, we note that at every energy $z\in\C$ there is a basis of (formal) eigenfunctions whose leading asymptotics at the origin are given by
$$
g_1(r) = r^{\tfrac12}  \cdot \begin{cases} 1 &: \nu=0\\ r^{\nu} &:\nu>0\end{cases} \qtq{and}
g_2(r) = r^{\tfrac12}  \cdot \begin{cases} \log(r) &: \nu=0\\ r^{-\nu}  &:\nu>0,\end{cases}
$$
respectively.  (These functions are actually exact zero-energy eigenfunctions.)  Notice that $g_1$ is always square integrable near the origin, while this is true for $g_2$ if and only if $\nu<1$.  Thus, when $\nu\geq 1$, the operator \eqref{LioudBessel} is essentially self-adjoint and formal eigenfunctions have the asymptotics of $g_1(r)$.  When $0\leq\nu<1$, the operator \eqref{LioudBessel} admits a one-parameter family of self-adjoint extensions corresponding to possible choices of boundary condition at $r=0$.  Demanding that eigenfunctions have the asymptotics proportional to those of $g_1(r)$ corresponds to the choice of the Friedrichs extension.

Transferring the foregoing information back to our original operator $\mathcal L_a^\circ$ leads to the following conclusions:
\begin{enumerate}
\item[(a)] If $a\geq 1 - (\tfrac{d-2}{2})^2$, then $\mathcal L_a^\circ$ is essentially self-adjoint and $\mathcal L_a$ denotes the unique self-adjoint extension.
Formal eigenfunctions that are spherically symmetric have asymptotics
\begin{equation}\label{formal asymp}
u(x) = c |x|^{-\sigma} \bigl(1 + O(|x|)\bigr)
\end{equation}
as $|x|\to 0$.  Eigenfunctions at higher angular momentum decay more rapidly at the origin.

\item[(b)] If $- (\tfrac{d-2}{2})^2\leq a < 1 - (\tfrac{d-2}{2})^2$, then $\mathcal L_a^\circ$ has deficiency indices $(1,1)$ and so admits a one-parameter family of self-adjoint extensions.  Such extensions differ in their action on spherically symmetric functions, but agree on the orthogonal complement.  The Friedrichs extension is characterized by the fact that spherically symmetric (formal) eigenfunctions have asymptotics \eqref{formal asymp} at the origin.
\end{enumerate}

As mentioned earlier, the inverse-square potential is important for its appearance as a scaling limit of less singular potentials.  This also justifies our focus on the Friedrichs extension.  Indeed, tedious but elementary ODE analysis shows the following:

\begin{proposition}
Let $V\in C^\infty(\R^d)$ obey $V(x) = a|x|^{-2} + O(|x|^{-3})$ as $|x|\to\infty$ with $a\geq - (\tfrac{d-2}{2})^2$.  Then the family of Schr\"odinger operators
$
-\Delta + \lambda^2 V(\lambda x)
$
converges in strong resolvent sense to $\mathcal L_a$ as $\lambda\to\infty$.
\end{proposition}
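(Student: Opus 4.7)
The plan is to prove strong convergence of the resolvents $(H_\lambda - i)^{-1} f \to (\mathcal L_a - i)^{-1} f$ in $L^2(\R^d)$ for every $f \in L^2$, where $H_\lambda := -\Delta + \lambda^2 V(\lambda x)$. The uniform bound $\|(H_\lambda - i)^{-1}\|_{L^2\to L^2}\leq 1$ reduces this to convergence on a dense subset of $L^2$; moreover, the identity
$$
\|(T - i)^{-1} f\|_{L^2}^2 = -\Im\,\bigl\langle f, (T-i)^{-1} f\bigr\rangle,
$$
valid for any self-adjoint operator $T$, shows that weak convergence of $u_\lambda := (H_\lambda - i)^{-1} f$ to $u := (\mathcal L_a - i)^{-1} f$ automatically yields $\|u_\lambda\|_{L^2} \to \|u\|_{L^2}$, and hence (combined with the weak convergence) strong convergence.

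To obtain weak convergence, fix $v \in C_c^\infty(\R^d \setminus \{0\})$, which lies in $D(H_\lambda) \cap D(\mathcal L_a)$. Integration by parts (justified since $v$ is supported away from the origin and $u$ is smooth there by elliptic regularity) yields
$$
\bigl\langle u_\lambda - u,\,(H_\lambda + i) v\bigr\rangle = \int_{\R^d} u(x)\bigl(a|x|^{-2} - \lambda^2 V(\lambda x)\bigr) \overline{v(x)}\, dx,
$$
which tends to zero as $\lambda \to \infty$ because $\lambda^2 V(\lambda x) \to a|x|^{-2}$ uniformly on $\supp v$ (using $V(y) = a|y|^{-2} + O(|y|^{-3})$ at infinity). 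Combined with $(H_\lambda + i) v \to (\mathcal L_a + i) v$ in $L^2$, which again follows from uniform convergence of the potential on $\supp v$, and the $L^2$-boundedness of $u_\lambda - u$, this gives $\langle u_\lambda - u, (\mathcal L_a + i) v\rangle \to 0$. In case (a) of the dichotomy above, where $\mathcal L_a^\circ$ is essentially self-adjoint on $C_c^\infty(\R^d \setminus \{0\})$, the range $(\mathcal L_a + i)(C_c^\infty(\R^d \setminus \{0\}))$ is dense in $L^2$, completing the argument.

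The principal obstacle is case (b), where $\mathcal L_a^\circ$ has deficiency indices $(1,1)$ and the range above has codimension one, corresponding to a single boundary mode in the $\ell = 0$ spherical-harmonic sector (all sectors $\ell \geq 1$ satisfy $\nu_{\ell,a}^2 = (\tfrac{d-2}{2})^2 + a + \ell(\ell+d-2) \geq d-1 > 1$ and are thus limit-point at $r = 0$). Handling this missing direction is the content of the ``tedious but elementary ODE analysis''. After reducing to radial $V$ (the angular deviation $V - V_{\mathrm{rad}}$ is still $O(|y|^{-3})$ at infinity and can be treated as a subordinate perturbation controlled via Hardy's inequality) and applying the Liouville transformation, the task in the $\ell = 0$ sector is to show that the unique-up-to-scalar solution $g_\lambda(r;i)$ of $-g'' + \lambda^2 V(\lambda r) g = i g$ that is regular at $r = 0$---well defined since $\lambda^2 V(\lambda r)$ is smooth at the origin for each finite $\lambda$---converges, after normalization and on compact subintervals of $(0,\infty)$, to the Friedrichs-asymptotic solution $g \sim r^{1/2 + \nu_{0,a}}$ of the limit Bessel equation, where $\nu_{0,a} = \sqrt{((d-2)/2)^2 + a}$. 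Heuristically, on $r \gg 1/\lambda$ the potential $\lambda^2 V(\lambda r) \approx a r^{-2}$ forces $g_\lambda$ to match the Bessel-type asymptotic there, while smoothness of $V$ on $r \lesssim 1/\lambda$ fixes the normalization; the matching at $r \sim 1/\lambda$ selects the Friedrichs branch $r^{1/2+\nu_{0,a}}$ rather than the other extension's $r^{1/2-\nu_{0,a}}$. Analogous control of the decaying-at-infinity solution, combined with uniform kernel bounds, then yields strong $L^2$-convergence of the resolvent kernels in the $\ell = 0$ sector and completes the proof.
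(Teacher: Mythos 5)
The paper offers no proof of this proposition --- it states only that it follows from ``tedious but elementary ODE analysis'' --- so your argument can only be assessed on its own merits. Your reduction of strong resolvent convergence to weak convergence is sound (modulo a sign typo: for self-adjoint $T$ one has $\|(T-i)^{-1}f\|^2=+\Im\,\langle f,(T-i)^{-1}f\rangle$, not $-\Im$). The computation showing $\langle u_\lambda-u,(\mathcal L_a+i)v\rangle\to 0$ for $v\in C_c^\infty(\R^d\setminus\{0\})$, via the adjoint relation and locally uniform convergence of $\lambda^2 V(\lambda x)$ to $a|x|^{-2}$ away from the origin, is correct; and since $(\mathcal L_a^\circ+i)\bigl(C_c^\infty(\R^d\setminus\{0\})\bigr)$ is dense exactly when $\mathcal L_a^\circ$ is essentially self-adjoint, this completely settles case (a), i.e.\ $a\geq 1-(\tfrac{d-2}{2})^2$. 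Your identification of case (b) as the obstruction, with a single missing direction in the $\ell=0$ sector, is also correct, and the ODE-matching strategy you sketch is in the spirit of what the paper alludes to.

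There is, however, a genuine gap in your treatment of case (b), specifically the reduction to radial $V$. You assert that $W:=V-V_{\mathrm{rad}}$ ``can be treated as a subordinate perturbation controlled via Hardy's inequality,'' but this does not go through: since $W$ is smooth with $W(0)=0$ and $W(y)=O(|y|^{-3})$ at infinity, the rescaled non-radial part obeys $|\lambda^2W(\lambda x)|\lesssim\min\{\lambda^3|x|,\,\lambda^{-1}|x|^{-3}\}$, which is of size $\sim\lambda^2$ on $\{|x|\lesssim 1/\lambda\}$; its pointwise comparison with $|x|^{-2}$ has a constant that is $O(1)$ uniformly in $\lambda$ rather than $o(1)$, and its $L^{d/2}$ norm is scale-invariant, hence does not decay as $\lambda\to\infty$. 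So Hardy's inequality does not make this term negligible, and the spherical-harmonics decomposition (which requires a radial potential, since $W$ couples distinct $\ell$-sectors) is not available by this route. There is also a small slip in your Liouville-transformed equation, which should include the angular term from the radial Laplacian: $-g''+\tfrac{(d-1)(d-3)}{4r^2}g+\lambda^2V(\lambda r)g=ig$. Finally, the key matching claim --- that the solution regular at $r=0$ converges, suitably normalized, to the Friedrichs branch $r^{1/2+\nu}$ on compacta of $(0,\infty)$ --- is precisely the ``tedious ODE analysis'' the paper references and must still be carried out quantitatively; one concrete way to organize it is to exhibit a core for $\mathcal L_a$ of the form $C_c^\infty(\R^d\setminus\{0\})+\mathrm{span}\{u_F\}$ with $u_F\sim|x|^{-\sigma}$ near the origin, and to construct approximants $u_F^\lambda\in H^2(\R^d)$ with $u_F^\lambda\to u_F$ and $H_\lambda u_F^\lambda\to\mathcal L_a u_F$ in $L^2$ (this is the Reed--Simon core criterion for strong resolvent convergence, which sidesteps the weak-limit bookkeeping).
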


\subsection{Notation.}
If $X, Y$ are nonnegative quantities, we write $X\lesssim Y $ or $X=O(Y)$ whenever there exists a constant $C$ such that $X\leq CY$.  We write $X \sim Y$ whenever $X\lesssim Y\lesssim X$.  This notation suffices to suppress most absolute constants, with one notable exception, namely, bounds on the rate of Gaussian decay in the heat kernel.  For this (and subsequent estimates derived from it), we use the letter $c$ to denote a positive constant, which may vary from line to line.

For any $1\leq r \leq \infty$, we write $\|\cdot\|_{r}$ for the norm in $L^{r}(\mathbb{R}^d)$ where integration is with respect to Lebesgue measure and denote by $r'$ the conjugate exponent defined via $\frac{1}{r} + \frac{1}{r'}=1$. We will also use the notations
$$
A\vee B:=\max\{A,B\} \qtq{and} A\wedge B:=\min\{A,B\}.
$$

\subsection*{Acknowledgements} We are grateful to E.~M.~Ouhabaz and an anonymous referee for references connected with Theorem~\ref{thm:M}.  R.~Killip was supported by NSF grant DMS-1265868.  He is grateful for the hospitality of the Institute of Applied Physics and Computational Mathematics, Beijing, where this project was initiated. C.~Miao was supported by NSFC grants 11171033 and 11231006.  M.~Visan was supported by NSF grant DMS-1161396.  J.~Zhang was supported by PFMEC (20121101120044), Beijing Natural Science Foundation (1144014), and NSFC grant 11401024. J.~Zheng was partly supported by the ERC grant SCAPDE.




\section{Heat and Riesz kernels}

We begin by recalling estimates on the heat kernel associated to the operator $\mathcal L_a$; these were found by Liskevich--Sobol \cite{LS} and Milman--Semenov \cite{MS}.

\begin{theorem}[Heat kernel bounds]\label{T:heat} Assume $d\geq 3$ and $a\geq-(\frac{d-2}2)^2$.  Then there exist positive constants $C_1,C_2$ and $c_1,c_2$ such that for all
$t>0$ and all $x,y\in \R^d\setminus\{0\}$,
\begin{equation*}
C_1\bigl( 1\vee \tfrac{\sqrt t}{|x|} \bigr)^\sigma \bigl( 1\vee \tfrac{\sqrt t}{|y|} \bigr)^\sigma t^{-\frac
d2}e^{-\frac{|x-y|^2}{c_1t}}\leq e^{-t\mathcal L_a}(x,y)\leq C_2\bigl( 1\vee \tfrac{\sqrt t}{|x|} \bigr)^\sigma \bigl( 1\vee \tfrac{\sqrt t}{|y|} \bigr)^\sigma t^{-\frac
d2}e^{-\frac{|x-y|^2}{c_2t}}.
\end{equation*}
\end{theorem}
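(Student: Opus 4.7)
The plan is to combine the scale invariance of $\mathcal L_a$ with separation of variables in spherical harmonics to produce an explicit formula for the heat kernel, then extract the two-sided bounds from sharp asymptotics on modified Bessel functions. Parabolic scaling yields
$$
e^{-t\mathcal L_a}(x,y) = t^{-d/2}\, K\bigl(x/\sqrt t,\, y/\sqrt t\bigr), \qquad K := e^{-\mathcal L_a}(\cdot,\cdot),
$$
so it is enough to prove the $t=1$ version of the claimed estimates; the announced weight collapses to $(1\vee|x|^{-1})^\sigma(1\vee|y|^{-1})^\sigma$, which is non-trivial only inside the unit ball.

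The rotational symmetry of $\mathcal L_a$ reduces its action on each spherical harmonic sector $\mathcal H_\ell$ to a Bessel operator of index $\nu_\ell := \sqrt{(\tfrac{d-2}{2})^2 + a + \ell(\ell + d - 2)}$; the only place the Friedrichs extension makes a genuine choice is the $\ell=0$ mode when $\nu_0 < 1$, where it selects $I_{\nu_0}$ over the singular alternative (cf.\ \S\ref{SS:La}). Summing the classical Bessel heat kernel over sectors gives, for $x = r\omega$, $y = r'\omega'$,
$$
e^{-t\mathcal L_a}(x,y) = \frac{(rr')^{-(d-2)/2}}{2t}\, e^{-(r^2+r'^2)/(4t)} \sum_{\ell\geq 0} I_{\nu_\ell}\!\left(\tfrac{rr'}{2t}\right)\, Z_\ell(\omega\cdot\omega'),
$$
with $Z_\ell$ the reproducing kernel of $\mathcal H_\ell$ on $S^{d-1}$. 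The analogous formula for $-\Delta$ has $\nu_\ell$ replaced by $\ell + (d-2)/2$ and its series sums to the standard Gaussian, so the task is to compare the $\mathcal L_a$-series against its free counterpart. For the upper bound I would combine the uniform estimates $I_\nu(z)\lesssim (z/2)^\nu/\Gamma(\nu+1)$ (small $z$) and $I_\nu(z)\sim e^z/\sqrt{2\pi z}$ (large $z$), together with monotonicity of $I_\nu(z)$ in $\nu$ at fixed $z$, to localize the deviation from the free kernel to the $\ell=0$ mode; its prefactor $(rr')^{-(d-2)/2}(rr'/(2t))^{\nu_0} \simeq t^{\sigma - (d-2)/2}(rr')^{-\sigma}$ is exactly the announced singular weight, once balanced against the baseline $t^{-d/2}$. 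Matching lower bounds follow from parallel lower asymptotics on $I_{\nu_0}$ and positivity of the full kernel, which is automatic from the Beurling--Deny criterion.

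The main obstacle is controlling the angular series uniformly in $\ell$: the indices $\nu_\ell$ grow like $\ell$ and the zonal kernels $Z_\ell$ are of size $\ell^{d-2}$ with oscillating sign, so absolute convergence requires delicate cancellation against the super-exponential decay of $I_{\nu_\ell}(z)$ in $\ell$ at fixed $z$. A cleaner alternative --- and the route actually carried out by Milman--Semenov~\cite{MS} --- is the Doob $h$-transform based on the formal positive null solution $u_0(x) = |x|^{-\sigma}$: the kernel $\tilde p_t(x,y) := u_0(x)^{-1}\, e^{-t\mathcal L_a}(x,y)\, u_0(y)^{-1}$ is the transition density of a symmetric drift-diffusion on the weighted space $L^2(u_0^2\,dx)$ whose coefficients are regular enough that standard Aronson--Nash--Davies theory provides unconditional two-sided Gaussian bounds. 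Undoing the transform reintroduces the factor $(|x||y|)^{-\sigma}$, which --- after capping at the on-diagonal size $t^{-d/2}$ in the regime $|x|,|y|\lesssim \sqrt t$ --- reassembles into the announced weight $(1\vee \sqrt t/|x|)^\sigma (1\vee \sqrt t/|y|)^\sigma$.
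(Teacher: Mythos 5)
The paper does not prove Theorem~\ref{T:heat}: it is stated as a recalled result and attributed to Liskevich--Sobol~\cite{LS} and Milman--Semenov~\cite{MS}, so there is no in-paper argument to compare against. Your second route---the Doob $h$-transform via the formal harmonic function $u_0(x)=|x|^{-\sigma}$---is precisely the ``desingularizing weights'' method of \cite{MS}, and you identify it correctly. After conjugation the generator becomes the Dirichlet-form Laplacian on $L^2(|x|^{-2\sigma}\,dx)$; since $-2\sigma>-d$ (recall $\sigma\leq\frac{d-2}{2}$), that weight is doubling and supports a scale-invariant Poincar\'e inequality, so the Gaussian two-sided machinery for such forms applies. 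The one place your sketch is too quick is the phrase ``standard Aronson--Nash--Davies theory'': the drift is singular at the origin and the reference measure degenerate, so what is actually invoked is the $A_2$-weighted parabolic Harnack/Gaussian-bound theory (Fabes--Kenig--Serapioni, Chiarenza--Serapioni, or the Dirichlet-form formulation of Grigor'yan, Saloff-Coste, and Sturm), and checking its hypotheses uniformly down to the endpoint $a=-(\frac{d-2}{2})^2$, where $\sigma=\frac{d-2}{2}$ and the weight is most singular, is the real content of \cite{MS}. Your first sketch---spherical-harmonic expansion into modified Bessel kernels---is a sound Ansatz and correctly isolates the singular prefactor from the $\ell=0$ sector, but, as you yourself flag, uniform control of the angular resummation is delicate, and in particular the sharp Gaussian off-diagonal factor does not drop out cheaply from termwise Bessel asymptotics; so the $h$-transform is the right path, and you were right to switch to it.
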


The bounds provided by Theorem~\ref{T:heat} are an essential ingredient in the following

\begin{lemma}[Riesz kernels]\label{lem:rke} Let $d\geq 3$ and suppose $0<s<d$ and $d-s-2\sigma>0$. Then the Riesz potentials
$$
\mathcal{L}_a^{-\frac{s}2}(x,y):=\frac1{\Gamma(s/2)}\int_0^\infty e^{-t\mathcal L_a}(x,y)t^{\frac{s}2}\frac{dt}t
$$
satisfy
\begin{equation}\label{est:rke}
\mathcal{L}_a^{-\frac{s}2}(x,y)\sim |x-y|^{s-d}\Big(\tfrac{|x|}{|x-y|}\wedge\tfrac{|y|}{|x-y|}\wedge1\Big)^{-\sigma}.
\end{equation}
\end{lemma}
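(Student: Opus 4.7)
The plan is to insert the two-sided heat kernel bounds from Theorem~\ref{T:heat} into the integral
$$
\mathcal{L}_a^{-s/2}(x,y) = \frac{1}{\Gamma(s/2)} \int_0^\infty e^{-t\mathcal{L}_a}(x,y)\, t^{s/2}\, \frac{dt}{t}
$$
and evaluate the resulting time integral piece by piece. Write $r=|x-y|$. Since the claimed estimate \eqref{est:rke} is symmetric in $(x,y)$, I may assume $|x|\leq |y|$, and then the triangle inequality forces $|y|\sim \max(|x|,r)$. Two regimes arise naturally: (A) $|x|\geq r$, in which $|x|\sim |y|$ and the target is $r^{s-d}$; and (B) $|x|<r$, in which $|y|\sim r$ and the target is $|x|^{-\sigma}\,r^{s-d+\sigma}$.

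Next, I would partition the $t$-integral according to how $\sqrt{t}$ compares to the three scales $|x|,|y|,r$. On each piece the pre-factor $(1\vee \sqrt t/|x|)^\sigma (1\vee \sqrt t/|y|)^\sigma$ collapses to a monomial in $t$, so the integrand reduces to $C(x,y)\, t^{\alpha-1} e^{-r^2/(ct)}$, which the substitution $u=r^2/t$ turns into an incomplete-Gamma integral. The Gaussian kills any contribution from $\sqrt{t}\ll r$; the hypothesis $s<d$ then ensures integrability at $t\to 0^+$, while the hypothesis $d-s-2\sigma>0$ ensures integrability at $t\to +\infty$ (where both pre-factors are active and the integrand behaves like $t^{\sigma+(s-d)/2-1}$).

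In regime (A), the sub-range $0<t\leq r^2$ already produces $\sim r^{s-d}$, since both pre-factors equal one there and the substitution $u=r^2/t$ yields a convergent Gamma integral independent of $x,y$; the remaining pieces each contribute at most $|x|^{s-d}\lesssim r^{s-d}$. In regime (B), the decisive contribution comes from $\sqrt{t}\in [|x|,r]$, on which only the first pre-factor is active: the same substitution produces exactly $|x|^{-\sigma} r^{s-d+\sigma}$ times a finite incomplete-Gamma integral over $u\in [1,r^2/|x|^2]$. The range $\sqrt{t}\geq r$ contributes the same order, while $\sqrt{t}\leq |x|$ is exponentially suppressed. Matching lower bounds are obtained, in both regimes, by restricting the integral to a narrow sub-interval such as $t\in [r^2/4,\, r^2]$ on which the Gaussian factor, the pre-factors, and the power of $t$ are all of definite size, and then applying the two-sided lower bound of Theorem~\ref{T:heat}.

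The main obstacle is the bookkeeping: there are several sub-regimes, the dominant ones switch as the sign of $\sigma$ changes, and one must handle $|x|$ near the transition $|x|\sim r$ between (A) and (B). No single sub-integral is difficult --- each is a routine Gamma-function estimate --- but one must patiently verify that the pieces assemble, from above and from below, into the compact form $r^{s-d}\bigl((|x|/r)\wedge (|y|/r)\wedge 1\bigr)^{-\sigma}$ claimed in \eqref{est:rke}.
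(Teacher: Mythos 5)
Your proposal is correct and follows essentially the same path as the paper's proof: insert the two-sided heat kernel bounds of Theorem~\ref{T:heat}, split the time integral according to how $\sqrt{t}$ compares with $|x|$, $|y|$, $|x-y|$ (the paper's $I_1$, $I_2$, $I_3$ after the substitution $\tau\mapsto |x-y|^2/\tau$), and evaluate the resulting Gamma-type integrals, with the hypotheses $s<d$ and $d-s-2\sigma>0$ guaranteeing their convergence. The only cosmetic difference is the mechanism for the lower bound --- you restrict to the fixed annulus $t\in[r^2/4,r^2]$, whereas the paper identifies the single piece that is already two-sided ($I_1$ in the near regime, $I_3$ in the far regime) --- but both devices are standard and equivalent here.
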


\begin{proof}
By symmetry, we need only consider the case when $|x|\leq|y|$.  In view of this reduction and Theorem~\ref{T:heat}, the key equivalence to be proved is
\begin{align}\label{E:cRiesz}
\int_0^\infty \tau^{\frac{s-d}2}e^{-\frac{c|x-y|^2}{\tau}} \Big(\tfrac{\sqrt{\tau}}{|x|}\vee1\Big)^\sigma\Big(\tfrac{\sqrt{\tau}}{|y|}\vee1\Big)^\sigma \tfrac{d\tau}\tau
    \sim_c |x-y|^{s-d}\Big(\tfrac{|x|}{|x-y|}\wedge1\Big)^{-\sigma}
\end{align}
for all $|x|\leq|y|$ and any $c>0$.  Making the change of variables $t=|x-y|^2/\tau$ we deduce that
\begin{equation*}
\text{LHS\eqref{E:cRiesz}} \sim |x-y|^{s-d}\big(I_1 + I_2 + I_3\bigr)
\end{equation*}
where
\begin{gather*}
I_1 := \int_{\frac{|x-y|^2}{|x|^2}}^\infty t^{\frac{d-s}2}e^{-ct}\tfrac{dt}t, \qquad I_2 := \tfrac{|x-y|^{\sigma}}{|x|^{\sigma}}\int_{\frac{|x-y|^2}{|y|^2}}^{\frac{|x-y|^2}{|x|^2}}t^{\frac{d-s-\sigma}2}e^{-ct}\tfrac{dt}t,\\
\text{and} \quad I_3:= \tfrac{|x-y|^{2\sigma}}{|x|^{\sigma}|y|^{\sigma}}\int_{0}^{\frac{|x-y|^2}{|y|^2}}t^{\frac{d-s-2\sigma}2}e^{-ct}\tfrac{dt}t.
\end{gather*}

In estimating these integrals, we will repeatedly use our hypotheses that $d-s>0$ and $d-s-2\sigma>0$.  These imply $d-s-\sigma>0$.

\noindent{\bf Case 1: $|x-y|\leq4|x|$.}  As $|x|\leq |y|$, in this case we have $|x-y|\lesssim |x|\sim |y|$.  Correspondingly,
\begin{equation*}
I_1 \sim_c 1, \qquad I_2 \lesssim  \tfrac{|x-y|^{\sigma}}{|x|^{\sigma}} \int_0^{\frac{|x-y|^2}{|x|^2}} t^{\frac{d-s-\sigma}2} \,\tfrac{dt}t \sim \tfrac{|x-y|^{d-s}}{|x|^{d-s}} \lesssim 1,
\end{equation*}
and
$$
I_3 \lesssim \tfrac{|x-y|^{2\sigma}}{|x|^{\sigma}|y|^{\sigma}}\int_{0}^{\frac{|x-y|^2}{|y|^2}}t^{\frac{d-s-2\sigma}2}\tfrac{dt}t \lesssim \tfrac{|x-y|^{d-s}}{|x|^{\sigma}|y|^{d-s-\sigma}} \lesssim 1.
$$
As $I_2$ and $I_3$ are positive, these estimates verify \eqref{E:cRiesz} in this case.

\noindent{\bf Case 2: $|x-y|\geq4|x|$.} As $|x|\leq |y|$, we have $|x-y|\sim |y| \geq |x|$ in this case.  Clearly,
\begin{align}\nonumber
I_3 \sim_c \tfrac{|x-y|^{2\sigma}}{|x|^{\sigma}|y|^{\sigma}} \sim \Big(\tfrac{|x|}{|x-y|} \Big)^{-\sigma}.
\end{align}
On the other hand,
\begin{equation*}
I_2 \lesssim \tfrac{|x-y|^{\sigma}}{|x|^{\sigma}} \int_{0}^{\infty} t^{\frac{d-s-2\sigma}2}e^{-ct}\frac{dt}t\lesssim \Big(\tfrac{|x|}{|x-y|} \Big)^{-\sigma}
\end{equation*}
and
$$
I_1 \lesssim_c \int_{\frac{|x-y|^2}{|x|^2}}^\infty e^{-ct/2}\,dt \lesssim_c  \Big(\tfrac{|x|}{|x-y|} \Big)^{-\sigma}.
$$
As $I_1$ and $I_2$ are positive, these estimates verify \eqref{E:cRiesz} in this case.
\end{proof}




\section{Hardy inequality}\label{S:Hardy}

In this section, we establish Hardy-type inequalities adapted to the operator $\mathcal L_a$.  In the absence of a potential (i.e., for $a=0$), these are well-known (cf. \cite[\S II.16]{Triebel:SoF}):

\begin{lemma}[Hardy inequality for $-\Delta$]\label{lem:classhardy}  For $1<p<\infty$ and $0\leq s<{\frac d p}$,
\begin{align}\label{hardy}
\big\||x|^{-s}f(x)\big\|_{L^p(\R^d)}\lesssim \big\|(-\Delta)^\frac{s}2f\big\|_{L^p(\R^d)}.
\end{align}
\end{lemma}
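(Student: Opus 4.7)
The plan is to invert the operator $(-\Delta)^{s/2}$ using the Riesz potential and then bound the resulting weighted convolution on $L^p$.

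Setting $g:=(-\Delta)^{s/2}f$, we have $f = c_{d,s}\, I_s g$, where $I_s$ denotes the Riesz potential with kernel proportional to $|x-y|^{s-d}$. The claim \eqref{hardy} is therefore equivalent to the $L^p$-boundedness of the integral operator with kernel $K(x,y)=|x|^{-s}|x-y|^{s-d}$, i.e., to
$$
\bigl\| |x|^{-s}\, I_s g \bigr\|_{L^p(\R^d)} \lesssim \|g\|_{L^p(\R^d)}.
$$
Note that $K$ is homogeneous of degree $-d$, so the inequality is scaling-invariant, which matches the dimensional balance $s = d/p - d/q$ imposed by our hypotheses.

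My preferred route is via H\"older's inequality in Lorentz spaces. The hypothesis $0 \le s < d/p$ ensures both that $|x|^{-s} \in L^{d/s,\infty}(\R^d)$ with $d/s > p$, and that the exponent $q$ defined by $\tfrac{1}{q} = \tfrac{1}{p} - \tfrac{s}{d}$ is finite and positive. The Hardy--Littlewood--Sobolev inequality in its Lorentz formulation---which follows from the weak bound $|x|^{s-d} \in L^{d/(d-s),\infty}$ together with Young's convolution inequality for Lorentz spaces (O'Neil)---gives $I_s : L^p(\R^d) \to L^{q,p}(\R^d)$. Then H\"older's inequality for Lorentz spaces, with first indices satisfying $\tfrac{1}{p} = \tfrac{s}{d} + \tfrac{1}{q}$ and second indices $\tfrac{1}{p} = 0 + \tfrac{1}{p}$, yields
$$
\bigl\| |x|^{-s}\, I_s g \bigr\|_{L^p} = \bigl\| |x|^{-s}\, I_s g \bigr\|_{L^{p,p}} \lesssim \bigl\| |x|^{-s} \bigr\|_{L^{d/s,\infty}} \bigl\| I_s g \bigr\|_{L^{q,p}} \lesssim \|g\|_{L^p},
$$
which is the desired estimate.

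There is no serious obstacle here, since this is a classical result; the only real work lies in keeping track of the Lorentz indices. A reader who prefers to avoid Lorentz spaces entirely can instead split the kernel $K(x,y)$ according to the regimes $|y| \le |x|/2$, $|y| \ge 2|x|$, and $|y| \sim |x|$: in the first two regions $K$ reduces to a one-dimensional Hardy-type average in the radial variable (handled by Hardy's inequality on $(0,\infty)$ with the measure $r^{d-1}\,dr$), while in the middle region it is dominated by the Hardy--Littlewood maximal function. Either route completes the proof; the endpoint $s=0$ is trivial.
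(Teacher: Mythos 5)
Your argument is correct, and it offers a genuinely different route. The paper does not prove Lemma~\ref{lem:classhardy} directly---it cites Triebel---but the method it uses for the generalization Proposition~\ref{P:hardy} (of which Lemma~\ref{lem:classhardy} is the case $\sigma=0$) is essentially your ``alternative route'': reduce to $L^p$-boundedness of the operator with kernel $|x|^{-s}|x-y|^{s-d}$ via Lemma~\ref{lem:rke}, split $\R^d\times\R^d$ into the three regions $|x-y|\lesssim|x|\wedge|y|$, $|x|\lesssim|x-y|\lesssim|y|$, $|y|\lesssim|x-y|\lesssim|x|$, and apply the weighted Schur test (Lemma~\ref{lem:schur}). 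In the two off-diagonal regions the weight $w(x,y)=(|x|/|y|)^\alpha$ plays exactly the role of the radial Hardy inequality you invoke, and the constraint that $\alpha$ lie in a nonempty interval is where the hypothesis $0\le s<d/p$ enters. Your primary route via Lorentz spaces---$|x|^{-s}\in L^{d/s,\infty}$, O'Neil's convolution inequality giving $I_s:L^p\to L^{q,p}$ with $\tfrac1q=\tfrac1p-\tfrac sd$, then Lorentz--H\"older---is a valid alternative and is pleasantly short, at the price of importing the Lorentz-space apparatus. The Schur route is more elementary and, more to the point, is the one that extends to $\sigma\neq0$, where the extra factor $\bigl(\tfrac{|x|}{|x-y|}\wedge\tfrac{|y|}{|x-y|}\wedge1\bigr)^{-\sigma}$ in the Riesz kernel changes the admissible weight ranges; that is why the paper organizes Section~\ref{S:Hardy} around it.
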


The range of $s$ in Lemma~\ref{lem:classhardy} is sharp; indeed, failure for $s=d/p$ can be seen when $f$ is any Schwartz function that does not vanish at the origin.
The main result of this section is the analogue of Lemma~\ref{lem:classhardy} in the presence of an inverse-square potential:

\begin{proposition}[Hardy inequality for $\mathcal L_a$]\label{P:hardy} Suppose $d\geq 3$, $0<s<d$, $d-s-2\sigma>0$, and $1<p<\infty$.  Then
\begin{equation}\label{est:hardy}
\big\||x|^{-s}f(x)\big\|_{L^p(\R^d)}\lesssim \big\|\mathcal L_a^\frac{s}2f\big\|_{L^p(\R^d)}
\end{equation}
holds if and only if
\begin{equation}\label{est:hardy_hyp}
s+\sigma < \tfrac dp < d-\sigma.
\end{equation}
\end{proposition}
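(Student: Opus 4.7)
My plan is to recast \eqref{est:hardy} as an $L^p$-bound on a concrete integral operator.  Setting $g:=\mathcal L_a^{s/2}f$, so that $f=\mathcal L_a^{-s/2}g$, inequality \eqref{est:hardy} for $f\in C_c^\infty(\R^d)$ becomes the bound $\|Tg\|_{L^p}\lesssim\|g\|_{L^p}$ on the class $\mathcal L_a^{s/2}(C_c^\infty)$, where
$$
Tg(x) := |x|^{-s}\mathcal L_a^{-s/2}g(x).
$$
By Lemma~\ref{lem:rke}, $T$ has positive kernel
$$
K(x,y) \sim |x|^{-s}|x-y|^{s-d}\Big(\tfrac{|x|\wedge|y|}{|x-y|}\wedge 1\Big)^{-\sigma},
$$
which is invariant under joint rotations of $(x,y)$ and homogeneous of degree $-d$ under joint dilation.

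For the sufficiency direction, I would apply the weighted Schur test on $L^p$ with power weights $w(y)=|y|^\beta$: the requirements reduce, after exploiting the homogeneity and rotation invariance of $K$, to the finiteness of the two one-dimensional integrals $\int K(e_1,z)|z|^{\beta p'}\,dz$ and $\int K(z,e_1)|z|^{\beta p}\,dz$.  I would analyze each by splitting the integration into three regimes---near $z=0$, near $z=e_1$, and near $|z|=\infty$---using the explicit form of $K$.  The contribution near $z=e_1$ is automatic from $s>0$, while the contributions at $0$ and $\infty$ yield a pair of linear inequalities on $\beta$ per integral (four inequalities total).  Straightforward bookkeeping shows that the hypothesis $d-s-2\sigma>0$ ensures each admissible $\beta$-range is nonempty, and their intersection is nonempty exactly when $s+\sigma<d/p<d-\sigma$; hence \eqref{est:hardy_hyp} is sufficient.

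For the necessity direction, I would extend \eqref{est:hardy} by density to the completion of $C_c^\infty(\R^d)$ under $\|\mathcal L_a^{s/2}\cdot\|_{L^p}$ and apply the extended inequality to $f=\mathcal L_a^{-s/2}\phi$ for $\phi\in C_c^\infty(\R^d\setminus\{0\})$, so that the RHS equals $\|\phi\|_{L^p}<\infty$.  Sending $|x|\to 0$ or $|x|\to\infty$ with $y$ ranging over the support of $\phi$, Lemma~\ref{lem:rke} yields
$$
\mathcal L_a^{-s/2}\phi(x)\sim c_\phi^0\,|x|^{-\sigma} \qtq{as} |x|\to 0, \qquad \mathcal L_a^{-s/2}\phi(x)\sim c_\phi^\infty\,|x|^{s-d+\sigma} \qtq{as} |x|\to\infty,
$$
with $c_\phi^0=\int|y|^{s-d+\sigma}\phi(y)\,dy$ and $c_\phi^\infty=\int|y|^{-\sigma}\phi(y)\,dy$ both nonzero for generic $\phi$.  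Hence $|x|^{-s}f$ behaves like $|x|^{-s-\sigma}$ near $0$ and like $|x|^{\sigma-d}$ near $\infty$; finiteness of the LHS forces $(s+\sigma)p<d$ and $(d-\sigma)p>d$, which together are precisely \eqref{est:hardy_hyp}.

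The principal obstacle I expect is the linear-programming step in the sufficiency argument: each individual integrability calculation is elementary, but extracting a feasible common weight exponent $\beta$ from both Schur integrals---where $\beta$ enters one with exponent $p'$ and the other with exponent $p$---requires careful tracking of how \eqref{est:hardy_hyp} translates into the intersection of the two admissible ranges, beyond the baseline requirement $d-s-2\sigma>0$ that ensures each individual range is nonempty.
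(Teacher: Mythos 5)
Your proposal follows essentially the same path as the paper: reduce \eqref{est:hardy} to $L^p$-boundedness of the explicit Riesz kernel from Lemma~\ref{lem:rke} via Schur's test with power weights, and establish necessity by feeding bump functions through $\mathcal{L}_a^{-s/2}$ and reading off divergent power-law behavior from the same kernel bounds.  The main organizational difference is that you propose to exploit the joint degree-$(-d)$ homogeneity and rotation invariance to collapse the Schur test to a single pair of integrals with a single free exponent $\beta$, whereas the paper splits $\R^d\times\R^d$ into three regions ($|x-y|\lesssim|x|\wedge|y|$; $|x|\ll|x-y|\sim|y|$; $|y|\ll|x-y|\sim|x|$) and applies Schur's test separately on each region, using the trivial weight $w\equiv 1$ on the first region and $(|x|/|y|)^\alpha$ with region-specific admissible ranges for $\alpha$ on the other two.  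Your ``linear-programming'' step is exactly the check that the two admissible ranges for $\beta$ overlap; this does go through under \eqref{est:hardy_hyp} (both intervals contain the value corresponding to the scale-invariant exponent $d/p$), so the single-weight version is valid and arguably cleaner, at the cost of having to track all three regions in one integral rather than one at a time.  Two small imprecisions worth fixing: (i) the weight must depend on both variables, e.g.\ $w(x,y)=(|x|/|y|)^\beta$, not a function of $y$ alone --- otherwise the supremum over $x$ in the Schur test fails for the $-d$-homogeneous kernel unless $\beta=0$; and (ii) Lemma~\ref{lem:rke} only gives two-sided bounds up to multiplicative constants, not genuine asymptotics, so the constants $c_\phi^0$, $c_\phi^\infty$ you write should be replaced by the lower bounds $\mathcal{L}_a^{-s/2}\phi(x)\gtrsim|x|^{-\sigma}$ and $\gtrsim|x|^{s-d+\sigma}$ valid for nonnegative $\phi$ (and your formula for $c_\phi^0$ has a stray $|y|^\sigma$); this is what the paper does.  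These are cosmetic and do not affect the validity of the argument.
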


When $a>0$ (or equivalently $\sigma<0$), we see that the Hardy inequality holds for a larger range of pairs $(s,p)$ than in the case $a=0$; when $-(\frac{d-2}2)^2\leq a<0$, the range is strictly smaller.  Note that under the over-arching hypothesis $d-s-2\sigma>0$, we are guaranteed that $s+\sigma < d-\sigma$.  Nonetheless, it is possible that $s+\sigma<0$ or that
$d-\sigma>d$, which trivialize the corresponding part of \eqref{est:hardy_hyp}.

The proof of Proposition~\ref{P:hardy} is built on the following variant of Schur's test; see \cite{KVZ12} for historical references and the (trivial) proof.

\begin{lemma}[Schur's test with weights]\label{lem:schur} Suppose $(X,d\mu)$ and $(Y,d\nu)$ are measure spaces and let $w(x,y)$ be a positive
measurable function defined on $X\times Y$. Let $K(x,y):~X\times Y\to\C$ satisfy
\begin{align}\label{est:scha1}
\sup_{x\in X}\int_Y
w(x,y)^\frac1p\big|K(x,y)\big|d\nu(y)=C_0<\infty,\\\label{est:scha2}
\sup_{y\in Y}\int_X
w(x,y)^{-\frac1{p'}}\big|K(x,y)\big|d\mu(x)=C_1<\infty,
\end{align}
for some $1<p<\infty$. Then the operator $T$ defined by
$$Tf(x)=\int_Y K(x,y)f(y)d\nu(y)$$
is a bounded operator from $L^p(Y,d\nu)$ to  $L^p(X,d\mu)$. In particular,
\begin{equation}
\big\|Tf\big\|_{ L^p(X,d\mu)}\leq
C_0^\frac1{p'}C_1^\frac1p\|f\|_{L^p(Y,d\nu)}.
\end{equation}
\end{lemma}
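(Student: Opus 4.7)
The plan is the standard Hölder-plus-Fubini argument. The heart of the matter is to split the integrand $|K(x,y)||f(y)|$ as a product of two factors whose $L^{p'}$ and $L^p$ norms (in $y$) connect cleanly to hypotheses \eqref{est:scha1} and \eqref{est:scha2} respectively.

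Concretely, I would write
\[
|K(x,y)\,f(y)| = \bigl(|K(x,y)|\,w(x,y)^{1/p}\bigr)^{1/p'} \cdot \bigl(|K(x,y)|\,w(x,y)^{-1/p'}\,|f(y)|^{p}\bigr)^{1/p},
\]
noting that the powers of $|K|$ reassemble to $|K|$ (since $\tfrac{1}{p'}+\tfrac{1}{p}=1$) and that the powers of $w$ cancel (since $\tfrac{1}{p}\cdot\tfrac{1}{p'}-\tfrac{1}{p'}\cdot\tfrac{1}{p}=0$). Then Hölder's inequality with conjugate exponents $p'$ and $p$ applied to the $d\nu(y)$ integral gives
\[
|Tf(x)| \leq \left(\int_Y |K(x,y)|\,w(x,y)^{1/p}\,d\nu(y)\right)^{1/p'} \left(\int_Y |K(x,y)|\,w(x,y)^{-1/p'}\,|f(y)|^{p}\,d\nu(y)\right)^{1/p}.
\]
By \eqref{est:scha1}, the first factor is bounded by $C_0^{1/p'}$ uniformly in $x$.

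Next I would raise both sides to the power $p$, integrate $d\mu(x)$, and apply Fubini's theorem (all integrands being nonnegative) to exchange the order of integration. Hypothesis \eqref{est:scha2} then controls the inner $d\mu(x)$ integral by $C_1$, uniformly in $y$, leaving
\[
\int_X |Tf(x)|^p\,d\mu(x) \leq C_0^{p/p'}\, C_1 \int_Y |f(y)|^p\,d\nu(y).
\]
Taking $p$-th roots yields the claimed operator norm bound $C_0^{1/p'}C_1^{1/p}$. There is no real obstacle: the only thing to watch is that the exponents of $w$ and $|K|$ in the splitting are chosen so that (i) Hölder recombines $|K|^{1/p'}\cdot|K|^{1/p}$ correctly, and (ii) the two $w$-weights produced match exactly those appearing in the two hypotheses. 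This is why the $1/p$ versus $-1/p'$ asymmetry in the statement is natural rather than accidental.
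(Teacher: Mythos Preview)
Your argument is correct and is precisely the standard H\"older--Fubini proof that the paper has in mind; indeed, the paper does not write out a proof at all, merely remarking that it is ``trivial'' and pointing to \cite{KVZ12} for details.
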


\begin{proof}[Proof of Proposition \ref{P:hardy}.]
The estimate \eqref{est:hardy} is equivalent to
\begin{equation}\label{est:hardyequi}
\Big\||x|^{-s}\mathcal L_a^{-\frac{s}2}g\Big\|_{L^p(\R^d)}\lesssim\|g\|_{L^p(\R^d)}.
\end{equation}

By Lemma~\ref{lem:rke}, to prove \eqref{est:hardyequi} it suffices to show that the kernel
\begin{equation}
K_{\sigma}(x,y):=|x|^{-s}|x-y|^{s-d}\Big(\frac{|x|}{|x-y|}\wedge\frac{|y|}{|x-y|}\wedge1\Big)^{-\sigma}
\end{equation}
defines a bounded operator on $L^p(\R^d)$. To this end, we divide the kernel into three pieces.

\noindent{\bf Case 1: $|x-y|\leq 4(|x|\wedge|y|).$} In this case, we have $|x|\sim|y|$ and the kernel becomes
$$
K_\sigma(x,y)=|x|^{-s}|x-y|^{s-d}.
$$
It is easily seen that
$$
\int_{|x-y|\leq 4|x|} K_\sigma(x,y)dy+\int_{|x-y|\leq 4|y|} K_{\sigma}(x,y)dx\lesssim1,
$$
and so $L^p$-boundness on this region follows immediately from Lemma~\ref{lem:schur} with $w(x,y)\equiv1$.

\noindent{\bf Case 2: $4|x|\leq|x-y|\leq4|y|$.} In this region, we have $|x-y|\sim|y|\geq|x|$ and the kernel becomes
$$
K_\sigma(x,y)\sim|x|^{-s-\sigma}|x-y|^{\sigma+s-d}\sim |x|^{-s-\sigma}|y|^{\sigma+s-d}.
$$
Let the weight $w(x,y)$ be defined by
$$
w(x,y)=\Big(\frac{|x|}{|y|}\Big)^\alpha \quad\text{with}\quad p(s+\sigma)<\alpha<p'(d-s-\sigma).
$$
The assumption $p(s+\sigma)<d$ guarantees that it is possible to chose such an $\alpha$.

As
\begin{equation*}
\int_{4|x|\leq|x-y|\leq4|y|}w(x,y)^\frac1p\big|K_\sigma(x,y)\big|dy \lesssim|x|^{-s-\sigma+\frac{\alpha}p}\int_{|y|\geq|x|}|y|^{\sigma+s-d-\frac{\alpha}p} dy\lesssim1,
\end{equation*}
the hypothesis \eqref{est:scha1} is satisfied in this region.  The fact that hypothesis \eqref{est:scha2} is also satisfied follows from
\begin{align*}
\int_{4|x|\leq|x-y|\leq4|y|}w(x,y)^{-\frac1{p'}}\big|K_\sigma(x,y)\big|dx
&\lesssim|y|^{\sigma+s-d+\frac{\alpha}{p'}}\int_{|x|\leq|y|}|x|^{-s-\sigma-\frac{\alpha}{p'}}dx\lesssim1.
\end{align*}
Thus, an application of Lemma \ref{lem:schur} yields $L^p$-boundedness on this region.

\noindent {\bf Case 3: $4|y|\leq|x-y|\leq4|x|$.} On this region, we have $|x-y|\sim|x|\geq|y|$ and the kernel becomes
$$
K_\sigma (x,y)\sim |x|^{-s}|y|^{-\sigma}|x-y|^{\sigma+s-d}\sim |x|^{\sigma-d}|y|^{-\sigma}.
$$

\noindent{\bf Subcase 3(a): $\sigma<0$.} We note that
$$
\int_{4|y|\leq|x-y|\leq4|x|}\left|K_\sigma(x,y)\right|dy\lesssim|x|^{\sigma-d}\int_{|y|\leq|x|}|y|^{-\sigma}dy\lesssim1
$$
and
$$
\int_{4|y|\leq|x-y|\leq4|x|}\left|K_\sigma(x,y)\right|dx\lesssim|y|^{-\sigma}\int_{|x|\geq|y|}|x|^{\sigma-d}dx\lesssim1.
$$
$L^p$-boundness in this setting follows immediately from Lemma \ref{lem:schur}.

\noindent{\bf Subcase 3(b): $\sigma>0$.}  In this case, we use Lemma \ref{lem:schur} with weight given by
$$
w(x,y)=\Big(\frac{|x|}{|y|}\Big)^\alpha \quad\text{with}\quad p'\sigma<\alpha<p(d-\sigma).
$$
The assumption $p>\tfrac{d}{d-\sigma}$ guarantees that it is possible to chose such an $\alpha$.

The hypothesis \eqref{est:scha1} follows from
$$
\int_{4|y|\leq|x-y|\leq4|x|}w(x,y)^\frac1p\left|K_\sigma(x,y)\right|dy\lesssim |x|^{\sigma-d+\frac{\alpha}p}\int_{|y|\leq|x|}|y|^{-\sigma-\frac{\alpha}p}dy\lesssim1,
$$
while  hypothesis \eqref{est:scha2} is deduced from
\begin{align*}
\int_{4|y|\leq|x-y|\leq4|x|}w(x,y)^{-\frac1{p'}}\left|K_\sigma(x,y)\right|dx\lesssim|y|^{-\sigma+\frac{\alpha}{p'}}\int_{|x|\geq |y|}|x|^{\sigma-d-\frac{\alpha}{p'}}dx\lesssim1.
\end{align*}

This completes the proof of \eqref{est:hardy}.  Next, we will prove the sharpness of this inequality by constructing counterexamples.

\noindent {\bf Failure of \eqref{est:hardy} for $(s+\sigma)p\geq d$.} Let $|x_0|\gg1$ and let $\varphi\in C^\infty_c(\R^d)$ be non-negative, supported in $B(x_0,1)$, satisfying $\varphi\equiv1$ on $B(x_0,1/2)$.  Let $f:=\mathcal L_a^{-\frac s2} \varphi$.  Then
$$
\big\|\mathcal L_a^\frac{s}2f\big\|_{L^p}=\|\varphi\|_{L^p}\lesssim1.
$$
On the other hand, using Lemma~\ref{lem:rke} we see that for $|x|\leq 1$,
$$
f(x)=[\mathcal L_a^{-\frac{s}2}\varphi] (x)=\int \!\mathcal{L}_a^{-\frac{s}2}(x,y)\varphi(y)dy
    \gtrsim  \!\!\int_{|y-x_0|\leq \frac12} \frac{|x-y|^{s+\sigma-d}}{|x|^{\sigma}}\,dy \gtrsim_{x_0}|x|^{-\sigma}.
$$
Hence,
$$
\big\||x|^{-s}f(x)\big\|_{L^p(\R^d)} \gtrsim_{|x_0|} \big\||x|^{-(s+\sigma)}\big\|_{L^p(|x|\leq1)}=\infty \quad\text{whenever}\quad (s+\sigma)p\geq d.
$$

\noindent {\bf Failure of \eqref{est:hardy} for $p\leq \tfrac{d}{d-\sigma}$.} Let $\varphi$ be a non-negative bump function supported in $B(0,1)$ satisfying $\varphi\equiv1$ on $B(0,1/2)$.  Let $f:=\mathcal L_a^{-\frac s2} \varphi$. Then
$$
\big\|\mathcal L_a^\frac{s}2f\big\|_{L^p}=\|\varphi\|_{L^p}\lesssim1.
$$
On the other hand, using Lemma~\ref{lem:rke} we see that for $|x|\geq10$,
$$
f(x)=[\mathcal L_a^{-\frac{s}2}\varphi](x)=\int\mathcal{L}_a^{-\frac{s}2}(x,y)\varphi(y)dy
    \gtrsim |x|^{s+\sigma-d}\int_{|y|\leq\frac12} |y|^{-\sigma}dy\gtrsim|x|^{s+\sigma-d}.
$$
Hence,
$$
\big\||x|^{-s}f(x)\big\|_{L^p(\R^d)}\gtrsim\big\||x|^{\sigma-d}\big\|_{L^p(|x|\geq10)}=\infty \quad\text{whenever}\quad p\leq \tfrac{d}{d-\sigma}.
$$

This completes the proof of the lemma.
\end{proof}




\section{Multiplier theorem}\label{S:MM}

As discussed in the Introduction, the purpose of this section is to give a self-contained proof of Theorem~\ref{thm:M} in the case $-(\frac{d-2}2)^2\leq a<0$:

\begin{theorem}[Mikhlin Multipliers]\label{thm:multthm} Fix $-(\frac{d-2}2)^2\leq a<0$ and suppose that $m:~[0,\infty)\to\C$ satisfies
\begin{equation}\label{est:multthm}
\big|\partial^jm(\la)\big|\lesssim\la^{-j}\quad\text{for all} \quad 0\leq j\leq3\big\lfloor\tfrac{d}4\big\rfloor+3.
\end{equation}
Then $m(\sqrt{\mathcal L_a})$, which we define via the $L^2$ functional calculus, extends uniquely from $L^p(\R^d)\cap L^2(\R^d)$ to a bounded operator on $L^p(\R^d)$, for all $r_0<p<r_0':=\tfrac{d}\sigma$.
\end{theorem}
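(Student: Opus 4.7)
My plan is to combine the heat kernel bounds of Theorem~\ref{T:heat} with the scale-invariance of $\mathcal L_a$ and a Sikora--Wright-style analysis of the kernel of spectrally localized symbols, and to conclude via a weighted Schur test analogous to the one used in the proof of Proposition~\ref{P:hardy}.

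The first step is a dyadic reduction in the spectral variable. Fix $\phi\in C_c^\infty((\tfrac12,2))$ with $\sum_{k\in\Z}\phi(2^{-k}\lambda)\equiv 1$ on $(0,\infty)$ and set $F_k(\lambda):=m(2^k\lambda)\phi(\lambda)$. The hypothesis \eqref{est:multthm} yields $\|F_k\|_{C^N}\lesssim 1$ uniformly in $k$, with $N=3\lfloor d/4\rfloor+3$. Since $\mathcal L_a$ is scale invariant, the operator $F_k(2^{-k}\sqrt{\mathcal L_a})$ is conjugate through an $L^p$-normalised dilation to $F_k(\sqrt{\mathcal L_a})$, so it suffices to prove a uniform kernel estimate for $F(\sqrt{\mathcal L_a})$ whenever $F$ is supported in $[\tfrac14,4]$ with $\|F\|_{C^N}\lesssim 1$. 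Rescaling such an estimate to each dyadic frequency and summing over $k\in\Z$ then produces a pointwise bound for the integral kernel of $m(\sqrt{\mathcal L_a})$ to which Lemma~\ref{lem:schur} can be applied.

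The heart of the argument is this kernel estimate. Factor $F(\sqrt{\mathcal L_a})=G(\sqrt{\mathcal L_a})\,e^{-\mathcal L_a}$ with $G(\lambda):=F(\lambda)\,e^{\lambda^2}$, which has comparable $C^N$ norm. Theorem~\ref{T:heat} controls $e^{-\mathcal L_a}$ pointwise, producing in particular the anomalous weights $(1\vee|x|^{-1})^{\sigma}(1\vee|y|^{-1})^{\sigma}$ near the origin. To propagate spatial decay through the composition with $G(\sqrt{\mathcal L_a})$, I would express $(x-y)^\alpha K_F(x,y)$ as the kernel of iterated commutators $[\mathrm{ad}_{x_j}^{\alpha_j} G(\sqrt{\mathcal L_a})]\,e^{-\mathcal L_a}$, bound each commutator on $L^2$ by $\|G\|_{C^{|\alpha|}}$ via the spectral calculus, absorb the weights from Theorem~\ref{T:heat} into an $L^2$ norm, and apply the Sobolev embedding $H^s(\R^d)\hookrightarrow L^\infty$ valid for $s>d/2$. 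Spending the full budget of $N=3\lfloor d/4\rfloor+3$ derivatives in this way should yield
$$|K_F(x,y)|\lesssim (1\vee|x|^{-1})^{-\sigma}(1\vee|y|^{-1})^{-\sigma}(1+|x-y|)^{-M}$$
for an exponent $M$ comfortably larger than $d$.

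The finish is then mechanical. Summing the rescaled kernel bounds and applying Lemma~\ref{lem:schur} with weights $w(x,y)=(|x|/|y|)^\alpha$ yields $L^p$-boundedness in the range $r_0<p<r_0'$, exactly as in Proposition~\ref{P:hardy}: the admissible range of $\alpha$ in the Schur test matches the hypothesis on $p$ precisely. The $L^2$-bound is automatic from the spectral theorem, and self-adjointness of $\mathcal L_a$, so that $m(\sqrt{\mathcal L_a})^*=\overline{m}(\sqrt{\mathcal L_a})$, yields the symmetric half of the range by duality. The principal obstacle is the kernel estimate itself: the absence of Gaussian bounds precludes classical Calder\'on--Zygmund arguments, and the weight $(1\vee\sqrt t/|x|)^\sigma$ inherited from Theorem~\ref{T:heat} must be carried carefully through every commutator and every Sobolev embedding. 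It is exactly this weight that both enables the Schur test to close in the range $r_0<p<r_0'$ and is responsible for the failure of the multiplier bound outside it.
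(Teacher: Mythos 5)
Your overall strategy---dyadic decomposition in the spectral variable, a kernel estimate for a compactly supported bump of the symbol, and a weighted Schur test---differs structurally from the paper's proof, which instead establishes a weak-type $(q,q)$ bound for $r_0<q<2$ via a Calder\'on--Zygmund decomposition (with the bad pieces further smoothed by $(1-e^{-r_k^2\mathcal L_a})^\mu$), and obtains the off-diagonal spatial localization not from commutators and Sobolev embedding but from finite speed of propagation for $u_{tt}+\mathcal L_a u=0$. That difference in route would be fine if your argument closed, but it does not.

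The fatal gap is the ``mechanical'' finish. Even granting a kernel estimate of the form $|K_F(x,y)|\lesssim (1\vee|x|^{-1})^{\sigma}(1\vee|y|^{-1})^{\sigma}(1+|x-y|)^{-M}$ for the bump (note the exponent on the weights must be $+\sigma$, not $-\sigma$: for $a<0$ the heat kernel is \emph{singular} at the origin, so no kernel estimate with decay there can hold), rescaling by powers of $2$ and summing over $k\in\Z$ produces the kernel of $m(\sqrt{\mathcal L_a})$, which has a genuine non-integrable $|x-y|^{-d}$ singularity on the diagonal. This is already so for $a=0$ and a nontrivial Mikhlin symbol such as $m(\lambda)=\lambda^{2it}$. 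Such a kernel fails both conditions \eqref{est:scha1} and \eqref{est:scha2} for every weight $w$, because $\int_{|x-y|\le 1}|x-y|^{-d}\,dy$ diverges. The Schur test works in Proposition~\ref{P:hardy} only because there the Riesz kernel $|x-y|^{s-d}$ with $s>0$ is locally integrable; at $s=0$ there is no such gain, and one must exploit cancellation in the kernel rather than absolute integrability---which is precisely what the Calder\'on--Zygmund decomposition in the paper does. (Secondary concerns: $\mathcal L_a$ is not translation invariant, so ``bound each commutator on $L^2$ by $\|G\|_{C^{|\alpha|}}$ via the spectral calculus'' is far from automatic, and it is not clear how the singular weight survives the $H^s\hookrightarrow L^\infty$ step; the paper's use of finite propagation speed avoids both issues. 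But even if that kernel estimate were proved, the Schur-test conclusion would still fail.)
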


At the end of this section we will see that the restriction $r_0<p<r_0'$ is sharp.  Indeed, the operator $e^{-\mathcal L_a}$ is unbounded on $L^p$ for the complementary ranges of $p$ (including endpoints).  As noted in the Introduction, the number of derivatives required in \eqref{est:multthm} is far from sharp; we are merely making explicit what appears naturally in the proof below.

\begin{proof}[Proof of Theorem~\ref{thm:multthm}]
By the spectral theorem, the operator $T:=m(\sqrt{\mathcal L_a})$ is bounded on $L^2$.  Thus, using the Marcinkiewicz interpolation theorem and a duality argument, it suffices to show that $T$ is of weak-type $(q,q)$ whenever $r_0<q<2$, that is,
\begin{equation}\label{est:setbouned}
\big|\{x:~|Tf(x)|>h\}\big|\lesssim h^{-q}\|f\|_{L^q(\R^d)}^q \quad\text{for all}\quad h>0.
\end{equation}
Our argument for proving \eqref{est:setbouned} is inspired by that used in \cite[Theorem 3.1]{KVZ12}, which in turn is a synthesis of older techniques.  These previous incarnations were simpler due to the presence of Gaussian heat kernel bounds.

Applying the Calder\'on--Zygmund decomposition to $|f|^q$ at height $h^q$ yields a family of dyadic cubes $\{Q_k\}_k$, which allow us to decompose the original function $f$ as $f=g+b$ where $b=\sum_k b_k$ and $b_k=\chi_{Q_k}f$.  By construction,
\begin{equation}\label{est:czdecomp}
|g|\leq h\text{ a.e. } \quad \text{and}\quad |Q_k|\leq\frac1{h^q}\int_{Q_k}|f(x)|^qdx\leq 2^d|Q_k|.
\end{equation}
Note that by H\"older's inequality and \eqref{est:czdecomp},
\begin{equation}\label{est:l1est}
\int_{Q_k}|f(x)|dx\lesssim \|f\|_{L^q(Q_k)} |Q_k|^{\frac1{q'}}\lesssim h|Q_k|\lesssim h^{1-q}\int_{Q_k}|f(x)|^qdx.
\end{equation}

In the decomposition above, the `bad' parts $b_k$ do not obey any cancellation condition.  To remedy this, we further decompose $b_k=g_k+\tilde b_k$ according to the following definition:
\begin{equation}\label{est:badpart}
\tilde b_k:=(1-e^{-r_k^2\mathcal L_a})^\mu b_k \quad \text{and}\quad g_k:=\bigl[1-(1-e^{-r_k^2\mathcal L_a})^\mu\bigr] b_k= \sum_{\nu=1}^\mu c_\nu e^{-\nu r_k^2\mathcal L_a} b_k,
\end{equation}
where $r_k$ denotes the radius of $Q_k$ and $\mu:=\lfloor \frac d4\rfloor +1$.  As $f=g+\sum_k g_k+\sum_k \tilde b_k$, so $Tf=Tg+\sum_kTg_k+\sum_kT\tilde b_k$, and then
$$
\big\{|Tf|>h\big\}\subseteq \left\{|Tg|>\tfrac13h\right\}\cup\big\{\big|T\sum_kg_k\big|>\tfrac13h\big\}\cup\big\{\big|T\sum_k\tilde b_k\big|>\tfrac13h\big\}.
$$

By the Chebyshev inequality, the boundedness of $T$ in $L^2$, and \eqref{est:czdecomp},
\begin{align*}
\big|\big\{|Tg|>\tfrac13h\big\}\big|\lesssim h^{-2}\|Tg\|_{L^2}^2\lesssim h^{-2}\|g\|_{L^2}^2\lesssim h^{-q} \|g\|_{L^q}^q\lesssim h^{-q} \|f\|_{L^q}^q.
\end{align*}
Thus, the contribution of this term to \eqref{est:setbouned} is acceptable.

Arguing similarly, we estimate
\begin{align}\label{averages}
\big|\big\{\big|T\sum_kg_k\big|>\tfrac13h\big\}\big|\lesssim h^{-2} \big\|T\sum_kg_k\big\|_{L^2}^2\lesssim h^{-2}\big\|\sum_kg_k\big\|_{L^2}^2.
\end{align}

Using the heat kernel estimate given by Theorem~\ref{T:heat}, we obtain
\begin{align}\label{averages'}
\big\|\sum_kg_k\big\|_{L^2}^2&=\sum_{\nu, \nu'}c_\nu c_{\nu'}\sum_{k,l}\big\langle b_k,e^{-(\nu r_k^2+\nu' r_l^2)\mathcal L_a}b_l\big\rangle\\
&\lesssim\sum_{r_k\geq r_l}r_k^{-d}\int_{Q_l}\int_{Q_k} \big(\tfrac{r_k}{|x|}\vee1\big)^\sigma |b_k(x)|e^{-\frac{|x-y|^2}{cr_k^2}}
    \big(\tfrac{r_k}{|y|}\vee1\big)^\sigma |b_l(y)|\,dx\,dy. \notag
\end{align}
To proceed from here, we first freeze $k$ and $x\in Q_k$ and focus on
\begin{align}
\sum_{l:r_l\leq r_k} \int_{Q_l} e^{-\frac{|x-y|^2}{cr_k^2}} \big(\tfrac{r_k}{|y|}\vee1\big)^\sigma |b_l(y)|\,dy
&\lesssim \sum_{l:r_l\leq r_k} \int_{Q_l} e^{-\frac{|x-y|^2}{cr_k^2}} |b_l(y)|\,dy \label{ly1}\\
&\quad + \sum_{l:Q_l\subset B(0,2r_k)} \int_{Q_l} \big(\tfrac{r_k}{|y|}\big)^\sigma |b_l(y)|\,dy. \label{ly2}
\end{align}
Here we used that $Q_l \cap B(0,r_k) \neq \varnothing$ implies $Q_l\subseteq B(0,2r_k)$ because $r_l\leq r_k$.

To estimate RHS\eqref{ly1} we first observe that since $\diam(Q_l) \leq 2 r_k$,
$$
|x-y|^2 \geq \tfrac12|x-y'|^2-4r_k^2 \qtq{for all} y,y'\in Q_l.
$$
Using this and then \eqref{est:czdecomp} and \eqref{est:l1est}, we see that
\begin{align*}
\text{RHS\eqref{ly1}} &\lesssim \sum_{l:r_l\leq r_k} \|b_l\|_{L^1} \frac{1}{|Q_l|} \int_{Q_l} e^{-\frac{|x-y'|^2}{2cr_k^2}}\,dy' \\
&\lesssim h \sum_{l:r_l\leq r_k} \int_{Q_l} e^{-\frac{|x-y'|^2}{2cr_k^2}}\,dy' \\
&\lesssim h r_k^d .
\end{align*}
On the other hand, applying H\"older (in $l$ and $y$) and then \eqref{est:czdecomp} yields
\begin{align*}
\text{RHS\eqref{ly2}} &\lesssim \Biggl[\; \sum_{l:Q_l\subset B(0,2r_k)} \int_{Q_l} \big(\tfrac{r_k}{|y|}\big)^{\sigma q'} \,dy \Biggr]^{1/q'}
    \Biggl[\; \sum_{l:Q_l\subset B(0,2r_k)} \int_{Q_l} |b_l(y)|^q \,dy \Biggr]^{1/q} \\
&\lesssim \Biggl[\; \int_{B(0,2r_k)} \big(\tfrac{r_k}{|y|}\big)^{\sigma q'} \,dy \Biggr]^{1/q'}
    \Biggl[\; \sum_{l:Q_l\subset B(0,2r_k)} h^q |Q_l| \Biggr]^{1/q} \\
&\lesssim h r_k^d .
\end{align*}
Note that it was important here that $q<r_0$ since this guarantees $\sigma q' <d$.

Plugging this new information back into \eqref{averages'} and then using H\"older's inequality and \eqref{est:czdecomp} we deduce that
\begin{align*}
\big\|\sum_kg_k\big\|_{L^2}^2 &\lesssim  h \sum_k \int_{Q_k} \big(\tfrac{r_k}{|x|}\vee1\big)^\sigma |b_k(x)| \,dx \\
&\lesssim  h \Biggl[ \sum_{k} \int_{Q_k}
\big(\tfrac{r_k}{|x|}\vee1\big)^{\sigma q'} \,dx \Biggr]^{1/q'}
    \Biggl[\; \sum_{k} \int_{Q_k} |b_k(x)|^q \,dx \Biggr]^{1/q} \\
&\lesssim  h \Biggl[ \sum_{k} |Q_k| \Biggr]^{1/q'} \| f \|_{L^q} \\
&\lesssim h^{2-q} \| f \|_{L^q}^q
\end{align*}
In view of \eqref{averages}, this shows that the contribution from the functions $g_k$ is acceptable.

It remains to estimate the contribution of $\big\{\big|T\sum_k\tilde b_k\big|>\tfrac13h\big\}$.  Let $Q_k^*$ be the $2\sqrt{d}$ dilate of $Q_k$.   As
$$
\big\{\big|T\sum_k\tilde b_k\big|>\tfrac13h\big\}\subset \cup_jQ_j^\ast\cup\big\{x\in\R^d\backslash \cup_jQ_j^\ast:~\big|T\sum_k\tilde b_k\big|>\tfrac13h\big\},
$$
by Chebyshev's inequality and \eqref{est:czdecomp},
\begin{align*}
\big|\big\{\big|T\sum_k\tilde b_k\big|>\tfrac13h\big\}\big|
&\lesssim\sum_j|Q_j^\ast|+h^{-1}\sum_k\big\|T\tilde b_k\big\|_{L^1(\R^d\backslash Q_k^\ast)}\\
&\lesssim h^{-q}\|f\|_{L^q}^q+h^{-1}\sum_k\big\|T\tilde b_k\big\|_{L^1(\R^d\backslash Q_k^\ast)}.
\end{align*}
Therefore, to complete the proof of the theorem we need only show that
\begin{equation}\label{est:mhlreduce}
\big\|T\tilde b_k\big\|_{L^1(\R^d\backslash Q_k^\ast)}\lesssim
h^{1-q} \|b_k\|_{L^q}^q.
\end{equation}
To this end, we divide the region $\R^d\backslash Q_k^\ast$ into dyadic annuli of the form  $R< {\rm dist}\{x,Q_k\}\leq 2R$ for $r_k\leq R\in 2^\Z$.  We will prove the following $L^2$ estimate:
\begin{equation}\label{equ:guije}
\big\|T\tilde b_k\big\|_{L^2({\rm dist}\{x,Q_k\}> R)}\lesssim \big(\tfrac{r_k}R\big)^{2\mu}R^{-d(\frac12-\frac1{q'})}\|b_k\|_{L^q},
\end{equation}
with the implicit constant independent of $k$ and $R\geq r_k>0$.  Claim \eqref{est:mhlreduce} follows immediately from this, H\"older, \eqref{est:czdecomp}, and \eqref{est:l1est}:
\begin{align*}
\big\|T\tilde b_k\big\|_{L^1(\R^d\backslash Q_k^\ast)}
&=\sum_{R\geq r_k}\big\|T\tilde b_k\big\|_{L^1(R< {\rm dist}\{x,Q_k\}\leq 2R)}\\
&\lesssim\sum_{R\geq r_k}R^\frac{d}2\big\|T\tilde b_k\big\|_{L^2({\rm dist}\{x,Q_k\}>R)}\\
&\lesssim\sum_{R\geq r_k}R^\frac{d}2\big(\tfrac{r_k}R\big)^{2\mu} R^{-d(\frac12-\frac1{q'})} \|b_k\|_{L^q}\\
&\lesssim r_k^\frac{d}{q'}\|b_k\|_{L^q} \lesssim h^{1-q}
\|b_k\|_{L^q}^q.
\end{align*}
To guarantee that the sum above converges, we need $\frac{d}{q'}<2\mu$, which is satisfied under our hypotheses.

We now turn to \eqref{equ:guije} and write
\begin{equation*}
(T\tilde b_k)(x)= \int_{Q_k} \bigl[m(\sqrt{\mathcal L_a})(1-e^{-r_k^2\mathcal L_a})^\mu\bigr](x,y)b_k(y)dy.
\end{equation*}

Let $a(\la):=m(\la)(1-e^{-r_k^2\la^2})^\mu$, which we extend to all of $\R$ as an even function.  Using \eqref{est:multthm}, it is easy to check that
\begin{equation}\label{est:apartes}
\big|\pa^ja(\la)\big|\lesssim |\la|^{-j}\left(1\wedge r_k|\la|\right)^{2\mu}\quad \text{for all}\quad 0\leq j\leq3\lfloor\tfrac{d}4\rfloor+3.
\end{equation}

Now let $\varphi$ be a smooth, positive, even function, supported on $[-\frac12,\frac12]$, and such that $\varphi(\tau)=1$ for $|\tau|<\frac14$. Define
$\check\varphi$ and $\check\varphi_R$ via
\begin{equation*}
{\check\varphi}_R(\lambda):= R {\check\varphi}(R\lambda) := \frac1 {2\pi}\int e^{i\lambda\tau} \varphi\bigl(\tfrac\tau R\bigr)\,d\tau.
\end{equation*}
Since $a$ and $\varphi$ are even,
$$
a_1(\lambda) := (a * \check\varphi_R) (\lambda) = \pi^{-1}\int_0^\infty \cos(\lambda\tau) \hat{a}(\tau) \varphi\bigl(\tfrac\tau R\bigr)\,d\tau.
$$
The wave equation with inverse-square potential $u_{tt}+\mathcal L_a u=0$ obeys finite speed of propagation; see, for example, \cite{Sikora} or \cite[\S3]{CS}.
Noting that $\varphi(\tau/ R)$ is supported on the set $\{\tau:~|\tau|\leq\frac{R}2\}$, we therefore obtain
$$
\supp\Bigl( a_1\bigl(\sqrt{\mathcal L_a}\bigr)\delta_{y} \Bigr)\subseteq \bigcup_{\tau\leq\frac R2} \supp\Bigl(\cos\bigl(\tau\sqrt{\mathcal L_a}\bigr)\delta_{y} \Bigr) \subseteq B(y,\tfrac12 R).
$$
Thus, this part of the multiplier $a$ does not contribute to LHS\eqref{equ:guije}.

We now consider the remaining part of the multiplier $a$, namely,
$$
a_2(\lambda) := a_1(\lambda)-a(\lambda) = \int [a(\theta)-a(\lambda)] \check\varphi_R(\lambda-\theta)\,d\theta.
$$
When $|\lambda| \leq R^{-1}$,  using \eqref{est:apartes} and the rapid decay of $\check\varphi$, we get
\begin{align*}
\Big|\int a(\lambda)\check\varphi_R(\lambda-\theta)\,d\theta\Big|\lesssim&(1\wedge r_k|\lambda|)^{2\mu}\lesssim(1\wedge r_k|\lambda|)^{2\mu}(|\lambda| R)^{-2\mu},
\end{align*}
and
\begin{align*}
\Big|\int a(\theta)\check\varphi_R(\lambda-\theta)\,d\theta\Big| &\lesssim(1\wedge r_k|\lambda|)^{2\mu}(|\lambda| R)^{-2\mu}.
\end{align*}
Thus
\begin{equation}\label{a_2 bound 1}
|a_2(\lambda)| \lesssim (1\wedge r_k|\lambda|)^{2\mu}(|\lambda| R)^{-2\mu} \quad\text{when} \quad |\lambda| \leq R^{-1}.
\end{equation}

When $|\lambda| \geq R^{-1}$, expanding $a(\theta)$ in a Taylor series to order $j-1=3\lfloor\frac d4\rfloor+2$, we write
$$
a(\theta)-a(\lambda)=P_j(\theta) + {\mathcal E}(\theta) \qtq{where} P_j(\theta) := \sum_{\ell=1}^{j-1} \frac{a^{(\ell)}(\lambda)}{\ell!} (\theta-\lambda)^\ell
$$
and ${\mathcal E}$ denotes the error, which we estimate using \eqref{est:apartes} as follows:
\begin{equation*}
|{\mathcal E}(\theta)| \leq |a(\theta)| + |a(\lambda)| + |P_j(\theta)| \lesssim (1\wedge r_k|\lambda|)^{2\mu} \bigl|\tfrac{\theta-\lambda}{\lambda}\bigr|^j \quad\text{when}\quad |\theta-\lambda| > \tfrac12|\lambda|
\end{equation*}
(note $2\mu\leq 3\mu=j$) and
\begin{equation*}
|{\mathcal E}(\theta)| \leq \bigl\| a^{(j)} \bigr\|_{L^\infty([\frac\lambda2,\frac{3\lambda}2])} |\theta - \lambda|^j \lesssim (1\wedge r_k|\lambda|)^{2\mu}\bigl|\tfrac{\theta-\lambda}{\lambda}\bigr|^j
    \quad\text{when}\quad |\theta-\lambda| \leq \tfrac12|\lambda|.
\end{equation*}

For any $\ell\geq1$,
$$
\int(\theta-\lambda)^\ell\check\varphi_R(\theta-\lambda)\,d\theta=(\partial^\ell\varphi_R)^\vee (0)= 0.
$$
Thus $P_j(\theta)$ makes no contribution to the convolution defining $a_2(\lambda)$ and so
\begin{equation}\label{a_2 bound 2}
|a_2(\lambda)| \lesssim  \int |{\mathcal E}(\theta)||\check\varphi_R(\lambda-\theta)|\,d\theta \lesssim (1\wedge r_k|\lambda|)^{2\mu} (|\lambda|R)^{-j}
    \quad\text{when} \quad |\lambda| \geq R^{-1}.
\end{equation}

Combining \eqref{a_2 bound 1} with \eqref{a_2 bound 2} and the fact that $R\geq r_k$, we deduce that
\begin{align*}
|a_2(\lambda)|
&\lesssim\Bigl(\tfrac{1 \wedge r_k|\lambda|}{|\lambda| R}\Bigr)^{2\mu} (1+R^2\lambda^2)^{\frac{2\mu-j}2}
\lesssim  \bigl(\tfrac{r_k}R)^{2\mu} \int_0^\infty \bigl(\tfrac t{R^2}\bigr)^{\frac{j-2\mu}2} e^{-t/R^{2}} e^{-t\lambda^2}\tfrac{dt}{t}
\end{align*}
and so, by the spectral theorem followed by the triangle inequality,
\begin{align}\label{est:a2est}
\big\|a_2(\sqrt{\mathcal L_a})b_k\big\|_{L^2(\R^d)}
    &\lesssim \bigl(\tfrac{r_k}R)^{2\mu}\int_0^\infty \bigl(\tfrac t{R^2}\bigr)^{\frac{j-2\mu}2} e^{-t/R^{2}} \big\|e^{-t\mathcal L_a}b_k\big\|_{L^2}\tfrac{dt}{t}.
\end{align}
To proceed from here we need the following estimate, which we will justify later:
\begin{align}\label{I}
\big\|e^{-t\mathcal L_a}b_k\big\|_{L^2} \lesssim t^{-\frac{d}4} \bigl( t + r_k^2 \bigr)^\frac{d}{2q'} \|b_k\|_{L^q}.
\end{align}
Substituting this estimate into \eqref{est:a2est} yields
\begin{align*}
\big\|a_2(\sqrt{\mathcal L_a}) b_k\big\|_{L^2(\R^d)}
&\lesssim\bigl(\tfrac{r_k}R)^{2\mu} R^{-d(\frac12-\frac1{q'})}\|b_k\|_{L^q} \int_0^\infty
\bigl(\tfrac t{R^2}\bigr)^{\frac{j-2\mu}2-\frac d4} \bigl(1+\tfrac{t}{R^2}\bigr)^{\frac d{2q'}} e^{-\frac t{R^2}}\tfrac{dt}{t}\\
&\lesssim\bigl(\tfrac{r_k}R)^{2\mu} R^{-d(\frac12-\frac1{q'})}\|b_k\|_{L^q} ,
\end{align*}
for any $R\geq r_k$. (Note that we also rely on the fact that $\frac{j-2\mu}2-\frac d4>0$.)  Excepting the fact that it remains to justify
\eqref{I}, this completes the proof of \eqref{equ:guije} and with it, the proof of Theorem~\ref{thm:multthm}.

In view of the heat kernel bounds described in Theorem~\ref{T:heat}, \eqref{I} reduces to
\begin{align}\label{IiIiI}
\Big\|\int_{\R^d}\big(1\vee\tfrac{\sqrt{t}}{|x|}\big)^\sigma\big(1\vee\tfrac{\sqrt{t}}{|y|}\big)^\sigma e^{-\frac{|x-y|^2}{ct}}b_k(y)dy\Big\|_{L^2}
\lesssim t^{\frac{d}4} \big(t+r_k^2\bigr)^\frac{d}{2q'} \|b_k\|_{L^q}.
\end{align}
We consider four cases:

\noindent{\bf Case 1: $|x|<\sqrt{t},~|y|<\sqrt{t}$.}   Using H\"older's inequality and recalling that $q>r_0$, we estimate the contribution of this region to LHS\eqref{IiIiI} by
\begin{align*}
t^\sigma\Big\||x|^{-\sigma}\!\!\!\int_{|y|<\sqrt{t}}|y|^{-\sigma}b_k(y)dy\Big\|_{L^2(|x|<\sqrt{t})}
&\lesssim t^\sigma\big\||x|^{-\sigma}\big\|_{L^2(|x|<\sqrt{t})}\big\||y|^{-\sigma}\big\|_{L^{q'}(|y|<\sqrt{t})}\|b_k\|_{L^q}\\
&\lesssim t^{\frac{d}2(\frac12+\frac1{q'})}\|b_k\|_{L^q}.
\end{align*}

\noindent{\bf Case 2: $|x|\geq\sqrt{t},~|y|<\sqrt{t}$.} Using the Minkowski inequality and arguing as in Case 1, we estimate the contribution of this region to LHS\eqref{IiIiI} by
\begin{align*}
t^\frac\sigma2\!\!\int_{|y|<\sqrt{t}}|y|^{-\sigma}|b_k(y)|\big\|e^{-\frac{|x-y|^2}{ct}}\big\|_{L_x^2}dy
&\lesssim t^{\frac{\sigma}2+\frac{d}4}\big\||y|^{-\sigma}\big\|_{L^{q'}(|y|<\sqrt{t})}\|b_k\|_{L^q} \\
&\lesssim t^{\frac{d}2(\frac12+\frac1{q'})}\|b_k\|_{L^q}.
\end{align*}

\noindent{\bf Case 3: $|x|<\sqrt{t},~|y|\geq\sqrt{t}$.} We estimate the contribution of this region to LHS\eqref{IiIiI}
by\begin{align*}
t^\frac\sigma2\big\||x|^{-\sigma}\big\|_{L^2(|x|<\sqrt{t})}\big\|e^{-\frac{|y|^2}{ct}}\big\|_{L^{q'}}\|b_k\|_{L^q}
\lesssim t^{\frac{d}2(\frac12+\frac1{q'})}\|b_k\|_{L^q}.
\end{align*}

\noindent{\bf Case 4: $|x|\geq\sqrt{t},~|y|\geq\sqrt{t}$.} Using the Minkowski and H\"older inequalities, we estimate the contribution of this region to LHS\eqref{IiIiI} by
\begin{align*}
\Big\|\int_{|y|\geq\sqrt{t}}b_k(y)e^{-\frac{|x-y|^2}{ct}}dy\Big\|_{L^2_x}
\lesssim \int_{\R^d}|b_k(y)|\big\|e^{-\frac{|x|^2}{ct}}\big\|_{L_x^2}dy
\lesssim t^\frac{d}4\|b_k\|_{L^1}
\lesssim t^\frac{d}4r_k^\frac{d}{q'}\|b_k\|_{L^q}.
\end{align*}

This completes the proof of \eqref{IiIiI}, and with it, the proof of Theorem~\ref{thm:multthm}.
\end{proof}

We end this section with an example which shows that one cannot improve upon the range of exponents $p$ in Theorem~\ref{thm:multthm}, even if one assumes \eqref{est:multthm} holds for all $j\geq 0$.  By self-adjointness, it suffices to prove this failure  for $p\geq \frac d\sigma$ when $\sigma>0$.

To this end, let $m(\lambda):=e^{-\lambda^2}$; note that $m$ satisfies \eqref{est:multthm} for all $j\geq 0$.  Let $\varphi$ be a smooth radial cutoff supported in $B(0,1)$ such that $\varphi(x)=1$ for $|x|\leq 1/2$.  Using the heat kernel estimate provided by Theorem~\ref{T:heat}, for $|x|\leq 1$ we have
\begin{align*}
[m(\sqrt \mathcal L_a)\varphi](x)= [e^{-\mathcal L_a}\varphi](x)\gtrsim |x|^{-\sigma} \int_{|y|\leq 1/2} |y|^{-\sigma} e^{-c|x-y|^2}\, dy\gtrsim |x|^{-\sigma}.
\end{align*}
Therefore, $m(\sqrt \mathcal L_a)\varphi \notin L^p$ for any $p\geq \frac d\sigma$.

\section{Littlewood--Paley theory}\label{SS:LP}

In this section, we develop basic Littlewood--Paley theory, such as Bernstein and square function inequalities, adapted to the operator $\mathcal L_a$.  Our results in this section rely on Theorem~\ref{thm:M}.

Let $\phi:[0,\infty)\to[0,1]$ be a smooth function such that
\begin{align*}
\phi(\lambda)=1 \qtq{for} 0\le\lambda\le 1 \qtq{and} \phi(\lambda)=0\qtq{for} \lambda\ge 2.
\end{align*}
For each dyadic number $N\in 2^\Z$, we define
\begin{align*}
\phi_N(\lambda):=\phi(\lambda/N) \qtq{and} \psi_N(\lambda):=\phi_N(\lambda)-\phi_{N/2}(\lambda).
\end{align*}
Clearly, $\{\psi_N(\lambda)\}_{N\in \tz} $ forms a partition of unity for $\lambda\in(0,\infty)$.  We define the Littlewood--Paley projections as follows:
\begin{align*}
\po_{\le N} :=\phi_N\bigl(\sqrt{\mathcal L_a}\,\bigr), \quad \po_N :=\psi_N(\sqrt{\mathcal L_a}\,\bigr), \qtq{and} \po_{>N} :=I-\po_{\le N}.
\end{align*}

We also define another family of Littlewood--Paley projections via the heat kernel, as follows:
\begin{align*}
\tpo_{\le N} := e^{-\mathcal L_a/N^2}, \quad \tpo_N:=e^{-\mathcal L_a/N^2}-e^{-4\mathcal L_a/N^2}, \qtq{and} \tpo_{>N}:=I-\tpo_{\le N}.
\end{align*}

In what follows, we will write $P_N$, $\tilde P_N$, and so forth, to represent the analogous operators associated to the Euclidean Laplacian $-\Delta$.

\begin{lemma}[Bernstein estimates]\label{L:Bernie}
Let $1<p\leq q\le \infty$ when $a\geq0$ and let $r_0<p\leq q<r_0'=\frac d\sigma$ when $-(\frac{d-2}2)^2\leq a<0$. Then

\noindent{\rm(1)} The operators $\po_{\le N}$, $\po_{N}$, $\tpo_{N}$ and $\tpo_{\le N}$ are bounded on $L^p$;

\noindent{\rm(2)}  $\po_{\le N}$, $\po_{N}$, $\tpo_{N}$ and $\tpo_{\le N}$ are bounded from $L^p$ to $L^q$ with norm $O(N^{\frac dp-\frac dq})$;

\noindent{\rm(3)}  $N^s\|\po_N f\|_{\lpo} \sim \bigl\|(\mathcal L_a)^{\frac s2}\po_N f\bigr\|_{\lpo}$ for all $f\in C_c^{\infty}(\R^d)$ and all $s\in \R$.
\end{lemma}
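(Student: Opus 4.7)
The plan is to treat the three assertions in order, using Theorem~\ref{thm:M} as the workhorse. For part (1), each of the four multipliers has the form $m_N(\lambda)=F(\lambda/N)$ for a single fixed $F$: namely $\phi$, $\phi-\phi(2\,\cdot)$, $e^{-\lambda^2}$, and $e^{-\lambda^2}-e^{-4\lambda^2}$. In each case $|F^{(j)}(\mu)|\lesssim_j\mu^{-j}$, so $|\pa^j m_N(\lambda)|=N^{-j}|F^{(j)}(\lambda/N)|\lesssim\lambda^{-j}$; thus $m_N$ satisfies Mikhlin's condition uniformly in $N$ and Theorem~\ref{thm:M} gives the uniform $L^p$-bound.

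For part (2), the crux is the Bernstein estimate for the heat projection,
\begin{equation*}
\|\tpo_{\le N}f\|_{L^q(\R^d)}\lesssim N^{d/p-d/q}\|f\|_{L^p(\R^d)},
\end{equation*}
which I would extract directly from the heat-kernel bounds of Theorem~\ref{T:heat}. Writing $t=N^{-2}$ and $w_t(z):=(1\vee\sqrt t/|z|)^\sigma$, the pointwise bound
\begin{equation*}
|e^{-t\mathcal L_a}(x,y)|\lesssim w_t(x)\,w_t(y)\,t^{-d/2}e^{-|x-y|^2/(ct)}
\end{equation*}
reduces matters to handling $w_t$. Off the ball $\{|z|<\sqrt t\}$ one has $w_t\equiv1$, while on the ball $\|w_t\|_{L^r}\sim t^{d/(2r)}$ precisely when $r\sigma<d$, i.e., for $r<r_0'$. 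Splitting the output $L^q_x$-norm into the ball region (where H\"older absorbs $w_t(x)$ into a factor $t^{d/(2q)}$, since $q<r_0'$) and its complement (where $w_t(x)\equiv 1$ and the weight disappears), and symmetrically absorbing $w_t(y)$ on the input side via $p>r_0$, Young's inequality for the Gaussian $G_t\ast(\cdot)$ supplies the factor $t^{(d/q-d/p)/2}=N^{d/p-d/q}$. The estimate for $\tpo_N$ follows immediately from the identity $\tpo_N=\tpo_{\le N}-\tpo_{\le N/2}$. To pass to the frequency-localized projections I factor
\begin{equation*}
\po_{\le N}=\bigl[\phi_N(\sqrt{\mathcal L_a})\,e^{\mathcal L_a/N^2}\bigr]\tpo_{\le N}
\end{equation*}
(and similarly for $\po_N$); since $\phi_N$ is supported in $[0,2N]$, the bracketed multiplier $\phi_N(\lambda)e^{\lambda^2/N^2}$ is a smooth function of $\lambda/N$ on a bounded set and satisfies Mikhlin uniformly in $N$, so Theorem~\ref{thm:M} combined with the Bernstein just proved closes part (2).

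For part (3), pick $\tilde\psi\in C^\infty_c((0,\infty))$ equal to $1$ on $\supp\psi$, so that $\tilde\psi(\sqrt{\mathcal L_a}/N)\po_N=\po_N$ for every $N$. The two multipliers
\begin{equation*}
N^{-s}\lambda^s\tilde\psi(\lambda/N) \qtq{and} N^s\lambda^{-s}\tilde\psi(\lambda/N)
\end{equation*}
are both of the form $G(\lambda/N)$ for some $G\in C^\infty_c((0,\infty))$, hence Mikhlin uniformly in $N$; applying the corresponding bounded operators (via Theorem~\ref{thm:M}) to $\po_N f$ and to $\mathcal L_a^{s/2}\po_N f$ respectively yields the two halves of the claimed equivalence. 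The principal obstacle throughout the lemma is the heat-kernel Bernstein estimate for $\tpo_{\le N}$: the weight $(1\vee\sqrt t/|z|)^\sigma$ is exactly what forces the range $r_0<p\le q<r_0'$ when $\sigma>0$, matching the hypothesis of Theorem~\ref{thm:M}, while every other part of the lemma is a formal consequence of Theorem~\ref{thm:M} applied to suitable dyadic bumps.
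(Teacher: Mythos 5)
Your proposal is correct and follows essentially the same route as the paper: reduce (1) and (3) to Theorem~\ref{thm:M} by writing each multiplier as $G(\lambda/N)$ for a fixed Mikhlin $G$, reduce (2) to the $L^p\to L^q$ bound for $\tilde P^a_{\le N}$ by factoring out an $L^p$-bounded multiplier, and then prove that bound from the heat-kernel estimate via a ball/complement (four-region) case split with H\"older, Minkowski, and Young, using $p>r_0$ and $q<r_0'$ exactly where the weight $(1\vee\sqrt{t}/|z|)^\sigma$ must be integrated.
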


\begin{proof} The first and third claims are easy consequences of Theorem~\ref{thm:M}.  Henceforth, we will consider the second claim.  As both $\po_N$ and $\po_{\leq N}$ can be written as products of $\tpo_{\leq N}$ with $L^p$-bounded multipliers by virtue of Theorem~\ref{thm:M}, it suffices to prove that  $\tpo_{\leq N}$ is bounded from $L^p$ to $L^q$.

When $a\geq 0$ (or equivalently, $\sigma\leq0$), the heat kernel obeys Gaussian bounds (cf. Theorem~\ref{T:heat}); thus by Young's inequality,
$$
\| e^{-\mathcal L_a / N^2} f\|_{L^q(\R^d)} \lesssim N^d \bigl\|e^{-cN^2|x|^2}\bigr\|_{L^r(\R^d)} \|f\|_{L^p(\R^d)}\lesssim N^{\frac dp -\frac dq}\|f\|_{L^p(\R^d)},
$$
where $r$ is determined by $1+\frac1q=\frac1r+\frac1p$.

We now turn to the case $-(\frac{d-2}2)^2\leq a<0$ (or equivalently, $\sigma>0$).  By Theorem~\ref{T:heat},
\begin{equation}\label{II}
\Big\|e^{-\mathcal L_a / N^2} f\Big\|_{L^q(\R^d)}
\lesssim N^d\Big\|\big(1\vee\tfrac{1}{N|x|}\big)^\sigma\!\!\int_{\R^d}e^{-cN^2|x-y|^2}\big(1\vee\tfrac{1}{N|y|}\big)^\sigma\big|f(y)\big|dy\Big\|
_{L^q(\R^d)}.
\end{equation}
To estimate RHS\eqref{II}, we divide into four cases.

\noindent{\bf Case 1: $|x|\leq N^{-1},~|y|\leq N^{-1}$.}  Using H\"older and recalling that $r_0<p\le q<r_0'=\frac{d}{\sigma}$, we estimate the contribution of this region to RHS\eqref{II} by
\begin{align*}
N^{d-2\sigma}&\Big\||x|^{-\sigma}\int_{|y|\leq N^{-1}}|y|^{-\sigma}\big|f(y)\big|dy\Big\| _{L^q(|x|\leq N^{-1})}\\
&\lesssim N^{d-2\sigma}\big\||x|^{-\sigma}\big\| _{L^q(|x|\leq N^{-1})}\big\||y|^{-\sigma}\big\| _{L^{p'}(|y|\leq N^{-1})}\|f\|_{L^p}\lesssim N^{\frac dp-\frac dq}\|f\|_{L^p}.
\end{align*}

\noindent{\bf Case 2: $|x|\leq N^{-1},~|y|> N^{-1}$.} Using H\"older's inequality, we estimate the contribution of this region to RHS\eqref{II} by
\begin{align*}
N^{d-\sigma}&\Big\||x|^{-\sigma}\int_{\R^d}e^{-cN^2|x-y|^2}\big|f(y)\big|dy\Big\|_{L^q(|x|\leq N^{-1})}\\
&\lesssim N^{d-\sigma}\big\||x|^{-\sigma}\big\| _{L^q(|x|\leq N^{-1})}\big\|e^{-cN^2|y|^2}\big\| _{L^{p'}}\|f\|_{L^p}\lesssim N^{\frac dp-\frac dq}\|f\|_{L^p}.
\end{align*}

\noindent{\bf Case 3: $|x|> N^{-1},~|y|\leq N^{-1}$.} Using the Minkowski and H\"older inequalities, we estimate the contribution of this region to RHS\eqref{II} by
\begin{align*}
N^{d-\sigma}&\Big\|\int_{|y|\leq N^{-1}}|y|^{-\sigma}e^{-cN^2|x-y|^2}\big|f(y)\big|dy\Big\|_{L^q}\\
&\lesssim N^{d-\sigma}\big\|e^{-cN^2|x|^2}\big\|_{L^q}\big\||y|^{-\sigma}\big\| _{L^{p'}(|y|\leq N^{-1})}\|f\|_{L^p}
\lesssim N^{\frac dp-\frac dq}\|f\|_{L^p}.
\end{align*}

\noindent{\bf Case 4: $|x|> N^{-1},~|y|> N^{-1}$.} Using Young's inequality, we estimate the contribution of this region to RHS\eqref{II} by
\begin{align*}
N^{d}\Big\|\int_{\R^d}e^{-cN^2|x-y|^2}\big|f(y)\big|dy\Big\|_{L^q}\lesssim N^{d}\big\|e^{-cN^2|x|^2}\big\|_{L^r}\|f\|_{L^p}\lesssim N^{\frac dp-\frac dq}\|f\|_{L^p},
\end{align*}
where $1+\tfrac1q=\tfrac1r+\tfrac1p.$

This completes the proof of the lemma.
\end{proof}

\begin{lemma}[Expansion of the identity]\label{L:ident}  For any $1<p<\infty$ when $a\geq0$ and for any $r_0<p<r_0'=\frac d\sigma$ when $-(\frac{d-2}2)^2\leq a<0$, we have
\begin{equation*}
f(x) = \sum_{N\in 2^\Z} \bigl[\po_N f\bigr](x) = \sum_{N\in 2^\Z} \bigl[\tpo_N f\bigr](x),
\end{equation*}
as elements of $L^p(\R^d)$.  In particular, the sums converge in $L^p(\R^d)$.
\end{lemma}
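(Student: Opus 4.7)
The plan is to argue via telescoping, uniform boundedness, and density. From the definitions $\psi_N = \phi_N - \phi_{N/2}$ and $e^{-4\mathcal L_a/N^2} = e^{-\mathcal L_a/(N/2)^2}$ one gets $\po_N = \po_{\le N} - \po_{\le N/2}$ and $\tpo_N = \tpo_{\le N} - \tpo_{\le N/2}$, so dyadic partial sums collapse:
\[
\sum_{2^{j_0} \le N \le 2^{j_1}} \po_N f \;=\; \po_{\le 2^{j_1}} f - \po_{\le 2^{j_0 -1}} f,
\]
and analogously for $\tpo_N$. It therefore suffices to prove that, in $L^p$, the $\le N$-projectors tend to the identity as $N\to\infty$ and to zero as $N\to 0$. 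Since the multipliers $\phi(\lambda/N)$ and $e^{-\lambda^2/N^2}$ satisfy \eqref{est:multthm} with constants uniform in $N$, Theorem~\ref{thm:M} produces uniform $L^p$-bounds on $\po_{\le N}$ and $\tpo_{\le N}$. Combined with the density of $C_c^\infty(\R^d\setminus\{0\})$ in $L^p(\R^d)$, a three-epsilon argument reduces both convergences to test functions $f\in C_c^\infty(\R^d\setminus\{0\})$.

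For such $f$, convergence to $f$ is obtained by a Mikhlin scaling trick. Set $g := \mathcal L_a f$, which again lies in $C_c^\infty(\R^d\setminus\{0\})\subset L^p$. The $L^2$ functional calculus (together with $f\in D(\mathcal L_a)$) gives
\[
\po_{\le N} f - f = \tilde m_N(\sqrt{\mathcal L_a})\, g, \qquad \tpo_{\le N} f - f = m_N(\sqrt{\mathcal L_a})\, g,
\]
with $\tilde m_N(\lambda) := \tfrac{\phi(\lambda/N)-1}{\lambda^2}$ and $m_N(\lambda) := \tfrac{e^{-\lambda^2/N^2}-1}{\lambda^2}$; these identities persist in $L^p$ because both sides lie in $L^2\cap L^p$. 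Writing $\tilde m_N(\lambda) = N^{-2}\tilde m_1(\lambda/N)$ and $m_N(\lambda) = N^{-2} m_1(\lambda/N)$, one checks that $\tilde m_1(\mu)=(\phi(\mu)-1)/\mu^2$ and $m_1(\mu)=(e^{-\mu^2}-1)/\mu^2$ are $C^\infty([0,\infty))$ (the numerators vanish to second order at $\mu=0$) and decay like $\mu^{-2}$ at infinity, so both obey \eqref{est:multthm}. Dilation then forces $\tilde m_N$ and $m_N$ to obey \eqref{est:multthm} with constant $\lesssim N^{-2}$, and Theorem~\ref{thm:M} yields
\[
\|\po_{\le N} f - f\|_{L^p} + \|\tpo_{\le N} f - f\|_{L^p} \;\lesssim\; N^{-2} \|g\|_{L^p} \;\longrightarrow\; 0 \quad\text{as } N \to \infty.
\]

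For convergence to zero as $N\to 0$, I would invoke the Bernstein estimate of Lemma~\ref{L:Bernie}(2). Pick any $p_1$ with $\max(1,r_0) < p_1 < p$; then $f \in L^{p_1}$ and
\[
\|\po_{\le N} f\|_{L^p} + \|\tpo_{\le N} f\|_{L^p} \;\lesssim\; N^{d/p_1 - d/p} \|f\|_{L^{p_1}} \;\longrightarrow\; 0 \quad\text{as } N \to 0,
\]
since the exponent $d/p_1 - d/p$ is strictly positive. The one point requiring genuine care is the Mikhlin scaling in the middle paragraph: one must verify that $(\phi(\mu)-1)/\mu^2$ and $(e^{-\mu^2}-1)/\mu^2$ extend smoothly across $\mu=0$ and that their derivatives of every order obey the Mikhlin bound \eqref{est:multthm} both near $0$ and at infinity. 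Once this routine smoothness/decay check is in place, the remainder is a direct appeal to Theorem~\ref{thm:M} and Lemma~\ref{L:Bernie}.
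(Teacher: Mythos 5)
Your proof is correct, and it takes a genuinely different route from the paper's. The paper first proves $L^2$-norm convergence of the partial sums directly from the spectral theorem, using that zero is not an eigenvalue of $\mathcal L_a$, and then upgrades to $L^p$ convergence via the uniform $L^p$ bounds from Theorem~\ref{thm:M} together with a density/interpolation argument. You bypass the $L^2$ convergence step entirely: after telescoping and reducing by density (with uniform $L^p$ bounds) to $f\in C_c^\infty(\R^d\setminus\{0\})$, you prove quantitative $L^p$ convergence of $\po_{\le N}f$ and $\tpo_{\le N}f$ to $f$ at rate $O(N^{-2})$ by writing $\po_{\le N}f-f=\tilde m_N(\sqrt{\mathcal L_a})\,\mathcal L_a f$ with $\tilde m_N$ a rescaled Mikhlin multiplier of Mikhlin constant $\lesssim N^{-2}$, and convergence to zero at rate $O(N^{d/p_1-d/p})$ via the Bernstein estimate of Lemma~\ref{L:Bernie}. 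What your approach buys is explicit rates of convergence and an argument that never directly invokes the absence of a zero eigenvalue; that fact appears only implicitly, in the smoothness of $(\phi(\mu)-1)/\mu^2$ and $(e^{-\mu^2}-1)/\mu^2$ across $\mu=0$. The two points you flag as requiring care are indeed the crux: (i) that the rescaled multipliers $\tilde m_N,m_N$ obey \eqref{est:multthm} with constants $O(N^{-2})$ (which holds since $\tilde m_1,m_1$ are $C^\infty$ near $0$ and decay with all Mikhlin bounds at infinity), and (ii) that the $L^2$ functional-calculus identity transfers to $L^p$ — which is fine because both sides lie in $L^2\cap L^p$ and the $L^p$-bounded extension in Theorem~\ref{thm:multthm} is unique. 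You handle both correctly.
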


\begin{proof}
For $p=2$ this follows from the spectral theorem and the fact that zero is not an eigenvalue of $\mathcal L_a$.  On the other hand, Theorem~\ref{thm:M} guarantees that partial sums are $L^p$ bounded for all $p$ allowed by the lemma; this allows one to upgrade convergence in $L^2$ to convergence in $L^p$ by a simple density/interpolation argument.
\end{proof}

As a direct consequence of Theorem~\ref{thm:M}, we obtain two-sided square functions estimates; see \cite[\S IV.5]{Stein} for the requisite argument.

\begin{theorem}[Square function estimates]\label{T:sq}
Fix $s\geq 0$.  Let $1<p<\infty$ when $a\geq0$ and let $r_0<p<r_0'=\frac d\sigma$ when $-(\frac{d-2}2)^2\leq a<0$. Then for any $f\in C_c^{\infty}(\R^d)$,
\begin{align*}
\biggl\|\biggl(\sum_{N\in2^\Z} N^{2s}| \po_N f|^2\biggr)^{\!\!\frac12}\biggr\|_{\lpo} \!\!    \sim \bigl\|\mathcal L_a^{\frac s2} f \bigr\|_{\lpo}
        \sim \biggl\|\biggl(\sum_{N\in2^\Z} N^{2s}| (\tpo_N)^k f|^2\biggr)^{\!\!\frac 12}\biggr\|_{\lpo},
\end{align*}
provided the integer $k\geq 1$ satisfies $2k > s$.
\end{theorem}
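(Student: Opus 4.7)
I would run the classical Stein-style randomisation argument, invoking Theorem~\ref{thm:M} in place of the Euclidean Mikhlin multiplier theorem. The three norms are compared pairwise, so it suffices to establish $\|Sf\|_{L^p} \sim \|\mathcal L_a^{s/2} f\|_{L^p}$ where $Sf = \bigl(\sum_N N^{2s}|\po_N f|^2\bigr)^{1/2}$; the heat-kernel version is handled by the same template, with the exponent $k$ entering through a geometric-series convergence condition that forces $2k > s$.

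For the upper bound, Khintchine's inequality reduces the square function to the Rademacher average $\bigl(\E_\epsilon\bigl\|\sum_N \epsilon_N N^s \po_N f\bigr\|_{L^p}^p\bigr)^{1/p}$, and the operator inside factors as $m_\epsilon(\sqrt{\mathcal L_a})\,\mathcal L_a^{s/2}$ with
$$m_\epsilon(\lambda) := \sum_N \epsilon_N \bigl(\tfrac{N}{\lambda}\bigr)^s \psi_N(\lambda).$$
Since $\psi_N$ is supported in $\lambda \sim N$, the sum is locally finite, each summand is uniformly bounded, and one checks that $|\partial^j m_\epsilon(\lambda)| \lesssim \lambda^{-j}$ holds uniformly in $\epsilon$ to any finite order, so hypothesis \eqref{est:multthm} is satisfied. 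Theorem~\ref{thm:M} then gives the desired bound after averaging. The reverse inequality is handled by duality: pick $\tilde\psi_N \in C_c^\infty((0,\infty))$ with $\tilde\psi_N \psi_N = \psi_N$ and write
$$\langle \mathcal L_a^{s/2} f, g\rangle = \sum_N \bigl\langle N^s \po_N f,\, N^{-s}\mathcal L_a^{s/2}\tilde\psi_N(\sqrt{\mathcal L_a})g\bigr\rangle.$$
Pointwise Cauchy--Schwarz followed by H\"older bounds this by $\|Sf\|_{L^p}\|Tg\|_{L^{p'}}$, where $T g$ is the analogous $\ell^2_N$-valued square function built from the operators with multipliers $(\lambda/N)^s\tilde\psi_N(\lambda)$. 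These form another uniformly controlled Mikhlin family, so re-running the upper-bound argument with $s=0$ yields $\|Tg\|_{L^{p'}} \lesssim \|g\|_{L^{p'}}$, and duality closes the proof.

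For the heat-kernel square function the multiplier of $(\tpo_N)^k$ is $m_{N,k}(\lambda) = (e^{-\lambda^2/N^2} - e^{-4\lambda^2/N^2})^k$, which obeys $m_{N,k}(\lambda) \lesssim (\lambda/N)^{2k}$ for $\lambda \lesssim N$ and Gaussian-type decay for $\lambda \gtrsim N$, with analogous bounds at every order of differentiation. Running the argument with $\tilde m_\epsilon(\lambda) = \sum_N \epsilon_N (N/\lambda)^s m_{N,k}(\lambda)$: at $\lambda \sim M$ the summands with $N \ll M$ are exponentially small, those with $N \sim M$ are $O(1)$, and those with $N \gg M$ form a geometric tail $\sum_{N\gg M}(M/N)^{2k-s}$ that converges precisely because $2k > s$.

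The principal technical obstacle is this last bookkeeping step: verifying that $|\partial^j \tilde m_\epsilon(\lambda)| \lesssim \lambda^{-j}$ holds uniformly in $\epsilon$ for every $j$ up to the order required by \eqref{est:multthm}, especially through the transition region $N \sim \lambda$ where neither the small-$\lambda$ nor the large-$\lambda$ asymptotic of $m_{N,k}$ applies cleanly. The hypothesis $2k > s$ is precisely the threshold that makes the high-frequency tail summable in \emph{every} derivative order.
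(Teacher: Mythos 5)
Your proposal is correct and is precisely the Stein-type randomization argument the paper itself defers to (the text just cites \cite[\S IV.5]{Stein} and treats Theorem~\ref{T:sq} as a corollary of Theorem~\ref{thm:M}): Khintchine reduces the square function to uniformly Mikhlin-controlled multiplier families, the reverse bound follows by duality with fattened projections, and your observation that $2k>s$ is exactly what makes the high-$N$ geometric tail of $\sum_N\epsilon_N(N/\lambda)^s m_{N,k}(\lambda)$ (and each of its derivatives) converge matches the role the paper assigns to that hypothesis in the remark following the theorem.
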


Note that the function $\lambda \mapsto e^{-\lambda^2/N^2}-e^{-4\lambda^2/N^2}$ used to define $\tpo_N$ only vanishes to second order at $\lambda=0$. The restriction $k>s/2$ ensures that $N^s (\mathcal L_a)^{-s} (\tpo_N)^k$ is actually a Mikhlin multiplier.  In the next section we will take $k=1$, because Theorem~\ref{thm:equivsobolev} is only an assertion about $0<s<2$.




\section{Generalized Riesz transforms}
In this section we prove Theorem~\ref{thm:equivsobolev}. We start by estimating the difference between the kernels of the Littlewood--Paley projection operators adapted to the Euclidean Laplacian and $\mathcal L_a$, respectively.

\begin{lemma}[Difference of kernels]\label{lem:tedke}
Let $K_N(x,y):=\big(\tilde P_N-\tilde P_N^a\big)(x,y)$.

\noindent $(1)$ For $a\geq0$, there exists $c>0$ such that
\begin{equation}\label{est:6.1est1}
\big|K_N(x,y)\big|\lesssim
N^d\max\big\{1,N(|x|+|y|)\big\}^{-2}e^{-cN^2|x-y|^2},
\end{equation}
uniformly for $x,y\in\R^d\setminus \{0\}.$

\noindent $(2)$  For $-(\frac{d-2}2)^2\leq a<0$, there exists $c>0$ such that for any $M>0$,
\begin{equation}\label{est:6.1est1al0}
\big|K_N(x,y)\big|\lesssim \begin{cases}
N^{d-2\sigma}\left(|x||y|\right)^{-\sigma}, \qquad\qquad\quad &|x|,|y|\leq N^{-1},\\
N^d(N|x|)^{-\sigma} (N|y|)^{-M}, &2|x|\leq N^{-1}\leq|y|,\\
N^d(N|y|)^{-\sigma} (N|x|)^{-M}, &2|y|\leq  N^{-1}\leq|x|,\\
N^{d-2}(|x|+|y|)^{-2}e^{-cN^2|x-y|^2},  &|x|,|y|\geq \frac12N^{-1},
\end{cases}
\end{equation}
uniformly for $x,y\in\R^d\setminus \{0\}.$
\end{lemma}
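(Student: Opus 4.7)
My approach is to use Duhamel's formula against the free Laplacian. Since $\mathcal L_a - (-\Delta) = a|x|^{-2}$, integrating $\tfrac{d}{ds}\bigl[e^{-(t-s)\mathcal L_a}e^{-s(-\Delta)}\bigr]$ from $s=0$ to $s=t$ yields
$$
\bigl[e^{-t(-\Delta)}-e^{-t\mathcal L_a}\bigr](x,y) \;=\; a\int_0^t\!\!\int_{\R^d} e^{-(t-s)\mathcal L_a}(x,z)\,\tfrac{1}{|z|^2}\,e^{-s(-\Delta)}(z,y)\,dz\,ds.
$$
Since $K_N(x,y)$ is the difference of two such expressions (at $t=N^{-2}$ and $t=4N^{-2}$), the task reduces to bounding the right-hand side for $t\sim N^{-2}$.

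\textbf{Case (1), $a\geq 0$.} Theorem~\ref{T:heat} gives full Gaussian bounds on both heat kernels. The standard splitting $e^{-c|x-z|^2/(t-s)}e^{-c|z-y|^2/s}\leq e^{-cN^2|x-y|^2}\cdot e^{-c'[|x-z|^2/(t-s)+|z-y|^2/s]}$ for some $c'<c$ lets us factor out the overall Gaussian $e^{-cN^2|x-y|^2}$. What remains is an integral of $|z|^{-2}$ against a convolution of two Gaussian heat kernels. Split $z\in\R^d$ into $\{|z|\leq\tfrac12(|x|+|y|)\}$ and its complement. On the complement, $|z|^{-2}\lesssim(|x|+|y|)^{-2}$ and the semigroup property closes the remaining integration to give $N^{d-2}(|x|+|y|)^{-2}e^{-cN^2|x-y|^2}$. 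On the inner region, at least one of $|x-z|$, $|z-y|$ is $\gtrsim |x|+|y|$, producing an additional Gaussian factor $e^{-cN^2(|x|+|y|)^2}$ that comfortably absorbs the $|z|^{-2}$ singularity (integrable in $d\geq 3$ on any ball). Finally, when $N(|x|+|y|)\lesssim 1$ we dispense with Duhamel altogether and simply control the four heat-kernel terms directly, obtaining $N^d\,e^{-cN^2|x-y|^2}$.

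\textbf{Case (2), $-(\tfrac{d-2}{2})^2\leq a<0$.} Now $\sigma>0$ and the $\mathcal L_a$-kernel carries the extra singular weight $(1\vee \sqrt{t}/|\cdot|)^\sigma$ from Theorem~\ref{T:heat}. I would treat the four subregions of \eqref{est:6.1est1al0} in turn. In the region $|x|,|y|\leq N^{-1}$ no Duhamel is needed: Theorem~\ref{T:heat} at $t\sim N^{-2}$ yields $e^{-t\mathcal L_a}(x,y)\lesssim N^{d-2\sigma}(|x||y|)^{-\sigma}$, which (since $\sigma>0$) dominates the Euclidean bound $\lesssim N^d$. For a mixed region, say $2|x|\leq N^{-1}\leq|y|$, note $|x-y|\geq |y|/2$, so Theorem~\ref{T:heat} gives $|e^{-t\mathcal L_a}(x,y)|\lesssim t^{-d/2}(\sqrt{t}/|x|)^\sigma e^{-c|y|^2/t}$, and converting the Gaussian decay $e^{-cN^2|y|^2}$ into polynomial decay yields $(N|y|)^{-M}$ for any $M$; the Euclidean contribution is dominated. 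The only subregion truly requiring Duhamel is $|x|,|y|\geq \tfrac12 N^{-1}$; there the prefactors $(1\vee\sqrt{t-s}/|x|)^\sigma$ are $O(1)$, and for $z$ with $|z|\ll\sqrt{t-s}$ the integrand picks up an extra weight $(\sqrt{t-s}/|z|)^\sigma|z|^{-2}$ which remains locally integrable in $d\geq 3$ since $\sigma+2<d$. The proof then proceeds exactly as in Case~(1), yielding $N^{d-2}(|x|+|y|)^{-2}e^{-cN^2|x-y|^2}$.

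\textbf{Main obstacle.} The most delicate step is the Duhamel estimate in the regime where both $|x|,|y|\gtrsim N^{-1}$, where one must simultaneously extract the sharp Gaussian factor $e^{-cN^2|x-y|^2}$, the polynomial weight $(|x|+|y|)^{-2}$, and the correct power $N^{d-2}$, all while controlling the interaction between the $|z|^{-2}$ singularity and the extra $(\sqrt{t-s}/|z|)^\sigma$ weight from the $\mathcal L_a$-kernel near $z=0$. The judicious splitting $|z|\leq\tfrac12(|x|+|y|)$ versus its complement, together with the careful redistribution of Gaussian exponents, is what makes this piece go through.
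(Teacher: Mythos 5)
Your proposal follows essentially the same route as the paper: Duhamel's formula to express the kernel difference, a regional decomposition according to the sizes of $|x|,|y|$ relative to $N^{-1}$, direct heat-kernel bounds in the easy regions, and a further radial splitting in $z$ to control the interaction between the $|z|^{-2}$ weight and the singular factor in the $\mathcal L_a$ heat kernel. The minor differences---placing $e^{-(t-s)\mathcal L_a}$ on the $x$-side of Duhamel rather than on the $y$-side, using the triangle inequality for the easy regions where the paper invokes the maximum principle, and taking the inner/outer $z$-threshold at $\tfrac12(|x|+|y|)$ rather than at $\tfrac12\sqrt{s}$---are immaterial. One small caution: your claim that on the inner region $|z|\leq\tfrac12(|x|+|y|)$ ``at least one of $|x-z|,|z-y|$ is $\gtrsim|x|+|y|$'' is false as stated (take $|x|=|y|=R$, $|x-y|\ll R$, and $z=\tfrac{x+y}{2}$, which has $|z|$ just under $R$ but $|x-z|=|z-y|\ll R$); shrinking the inner ball to $|z|\leq\varepsilon(|x|+|y|)$ for, say, $\varepsilon<\tfrac16$ restores the claim, while on the discarded annulus $|z|^{-2}\lesssim(|x|+|y|)^{-2}$ already holds, so it is absorbed into the outer estimate.
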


\begin{proof}
By the definition of the Littlewood--Paley projections, we may write
\begin{align}\label{k0}
K_N(x,y)&=\bigl[e^{\Delta/N^2}-e^{4\Delta/N^2}\bigr](x,y)-\bigl[e^{-\mathcal L_a/N^2}-e^{-4\mathcal L_a/N^2}\bigr](x,y)\\\nonumber
&=\bigl[e^{\Delta/N^2}-e^{-\mathcal L_a/N^2}\bigr](x,y)-\bigl[e^{4\Delta/N^2}-e^{-4\mathcal L_a/N^2}\bigr](x,y).
\end{align}

Let us begin with the case $a\geq 0$.  By the maximum principle, $0\leq e^{-t\mathcal L_a}(x,y) \leq e^{t\Delta}(x,y)$ and consequently,
$$
|K_N(x,y)| \leq e^{\Delta/N^2}(x,y) + e^{4\Delta/N^2}(x,y) \lesssim N^d e^{-N^2|x-y|^2/16}.
$$
This suffices to verify \eqref{est:6.1est1} when $|x|+|y|\leq N^{-1}$ or $|x|\geq 2|y|$ or $|y|\geq2|x|$.  Thus, we need only prove \eqref{est:6.1est1} when $|x|+|y|\geq N^{-1}$ with $|x|\sim |y|$.

By Duhamel's formula,
\begin{equation}\label{heat diff duhamel}
\bigl[e^{t\Delta} - e^{-t\mathcal L_a}\bigr](x,y)=a\int_0^t \int_{\R^d} e^{(t-s)\Delta}(x,z) \tfrac1{|z|^2} e^{-s\mathcal L_a}(z,y)\,dz\,ds.
\end{equation}
Thus, by exploiting the maximum principle again and using the fact that
$$
\tfrac{|x-z|^2}{t-s} + \tfrac{|z-y|^2}{s} \geq  \tfrac{|x-z|^2+|z-y|^2}{t} = \tfrac{2|z-\frac{x+y}{2}|^2}{t} + \tfrac{|x-y|^2}{2t} \geq \tfrac{|x-y|^2}{2t}
$$
we can deduce that
\begin{align*}
\big|&e^{t\Delta}(x,y) - e^{-t\mathcal L_a}(x,y)\big|\\
&\lesssim \int_0^t\int_{\R^d}(t-s)^{-\frac{d}2}e^{-\frac{|x-z|^2}{4(t-s)}}\tfrac1{|z|^2} s^{-\frac{d}2}e^{-\frac{|z-y|^2}{4s}}dz\,ds\\
&\lesssim t^{-\frac{d}2}e^{-\frac{|x-y|^2}{16t}}\Big[\int_0^\frac{t}2\int_{\R^d}\frac{s^{-\frac{d}2}}{|z+y|^2}e^{-\frac{|z|^2}{8s}}dz\,ds+
    \int_{\frac{t}2}^t\int_{\R^d}\frac{(t-s)^{-\frac{d}2}}{|z+x|^2}e^{-\frac{|z|^2}{8(t-s)}}dz\,ds\Big]\\
&\lesssim t^{-\frac{d}2}e^{-\frac{|x-y|^2}{16t}}\Big[\int_0^\frac12\int_{\R^d}\frac{s^{-1}}{|z+\frac{y}{\sqrt{ts}}|^2}e^{-\frac{|z|^2}{8}}dzds+
\int_0^\frac12\int_{\R^d}\frac{s^{-1}}{|z+\frac{x}{\sqrt{ts}}|^2}e^{-\frac{|z|^2}{8}}dzds\Big].
\end{align*}
Using the fact that for $d\geq3$
$$
\int_{\R^d}e^{-\frac{|z|^2}{8}}\tfrac1{|z-x_0|^2}dz\lesssim |x_0|^{-2}\qtq{for} |x_0|\geq1,
$$
we obtain
\begin{align*}
\big|e^{t\Delta}(x,y)-e^{-t\mathcal L_a}(x,y)\big| \lesssim t^{-\frac{d}2}e^{-\frac{|x-y|^2}{16t}}\frac{t}{(|x|+|y|)^2}\qtq{for all} |x|,|y|\geq\sqrt{t}.
\end{align*}
Taking $t=N^{-2}$ and $t=4N^{-2}$, the remaining case of \eqref{est:6.1est1} now follows.

Next we consider the case when $-(\frac{d-2}2)^2\leq a<0$.  By the maximum principle and Theorem~\ref{T:heat}, we have
\begin{equation*}
|K_N(x,y)| \leq e^{-\mathcal L_a/N^2}(x,y)+e^{-4\mathcal L_a/N^2}(x,y)\lesssim N^{d}e^{-{cN^2|x-y|^2}}\big(1\vee\tfrac{1}{N|x|}\big)^\sigma\big(1\vee\tfrac{1}{N|y|})^\sigma.
\end{equation*}
This directly justifies \eqref{est:6.1est1al0}, except when $|x|,|y|\geq \frac12 N^{-1}$ and $|x|\sim|y|$.  Let us now focus on this remaining scenario.  In view of Theorem~\ref{T:heat}, the analysis of this case
via \eqref{heat diff duhamel} follows as in the case $a\geq 0$, except in the regime where $z$ is small.  More precisely, to finish the proof,  it suffices to show
\begin{equation}\label{E:last dh}
 \int_0^t \int_{|z|^2 < \frac{s}{4}} e^{(t-s)\Delta}(x,z) \tfrac1{|z|^2} e^{-s\mathcal L_a}(z,y)\,dz\,ds \lesssim \frac{e^{-c|x-y|^2/t}}{t^{(d-2)/2} (|x|^2+|y|^2)}
\end{equation}
when $|x|,|y|\geq \sqrt{t}$ and $|x|\sim|y|$.

Under these conditions on $x$, $y$, and $z$, we have $|x-z|^2\sim|y-z|^2\sim(|x|^2+|y|^2)$.  Thus invoking Theorem~\ref{T:heat} and using $0\leq s\leq t$, we deduce that
\begin{align*}
\text{LHS\eqref{E:last dh}} \lesssim \int_0^t \int_{|z|^2 < t} \tfrac{t^{\sigma/2}}{|z|^{2+\sigma}} \,dz\, [s(t-s)]^{-d/2} \exp\bigl\{-\tfrac{ct}{s(t-s)}(|x|^2+|y|^2)\bigr\}\,ds.
\end{align*}
To continue, we employ the elementary inequality
$$
\sup_{0<s<t} [s(t-s)]^{-d/2} \exp\bigl\{-\tfrac{ct(|x|^2+|y|^2)}{s(t-s)}\bigr\} \lesssim [ t (|x|^2+|y|^2) ]^{-d/2} \exp\bigl\{-\tfrac{c}{t}(|x|^2+|y|^2)\bigr\},
$$
as well as the observation that $\sigma+2<d$.  This then allows us to conclude that
\begin{align*}
\text{LHS\eqref{E:last dh}} \lesssim \bigl[(|x|^2+|y|^2)\bigr]^{-d/2} \exp\bigl\{-\tfrac{c}{t}(|x|^2+|y|^2)\bigr\}\,ds,
\end{align*}
which easily implies \eqref{E:last dh} since $|x|,|y|\geq \sqrt{t}$.
\end{proof}

Using Lemma~\ref{lem:tedke}, we now estimate the difference between the two Littlewood--Paley square functions adapted to the Euclidean Laplacian and $\mathcal L_a$, respectively.

\begin{proposition}[Difference of square functions]\label{prop:lpdiffer}
Let  $d\geq 3$ and $0<s<2$. Assume $1<p<\infty$ when $a\geq0$ and $\max\{1,\frac{d}{d+s-\sigma}\}<p<\frac{d}{\sigma}$ when $-(\frac{d-2}2)^2\leq a<0$. Then for any $f\in C_c^{\infty}(\R^d)$, we have
\begin{align}\label{sfd}
\biggl\|\biggl(\sum_{N\in 2^{\Z}}N^{2s}\bigl|\tilde P_Nf\bigr|^2\biggr)^{\frac 12} -\biggl(\sum_{N\in 2^{\Z}}N^{2s}\bigl|\tilde P_N^a f\bigr|^2\biggr)^{\frac 12}\biggr\|_{L^p(\R^d)}
\lesssim \biggl\|\frac{f}{|x|^s}\biggr\|_{L^p(\R^d)}.
\end{align}
\end{proposition}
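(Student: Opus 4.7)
The plan is to bound the difference in \eqref{sfd} by a single square function of $T_N:=\tilde P_N-\tpo_N$ and then control that by a weighted integral operator via Minkowski and Schur's test. Applying the reverse triangle inequality for the $\ell^2(\tz)$-norm pointwise in $x$,
\begin{equation*}
\biggl|\biggl(\sum_{N\in\tz}N^{2s}|\tilde P_Nf|^2\biggr)^{\!1/2}-\biggl(\sum_{N\in\tz}N^{2s}|\tpo_Nf|^2\biggr)^{\!1/2}\biggr|\leq\biggl(\sum_{N\in\tz}N^{2s}|T_Nf|^2\biggr)^{\!1/2},
\end{equation*}
so it suffices to show that the $L^p$-norm of the right-hand side is dominated by $\||x|^{-s}f\|_{L^p}$. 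Writing $h(y):=|y|^{-s}f(y)$ and invoking Minkowski's integral inequality in the $\ell^2(\tz)$ direction,
\begin{equation*}
\biggl(\sum_{N\in\tz}N^{2s}|T_Nf(x)|^2\biggr)^{\!1/2}\leq\int_{\R^d}\mathcal K(x,y)\,|h(y)|\,dy,\quad \mathcal K(x,y):=|y|^s\biggl(\sum_{N\in\tz}N^{2s}|K_N(x,y)|^2\biggr)^{\!1/2},
\end{equation*}
where $K_N$ is the kernel of $T_N$ provided by Lemma~\ref{lem:tedke}. The proof therefore reduces to showing that the integral operator with kernel $\mathcal K$ is bounded on $L^p(\R^d)$.

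To estimate $\mathcal K$, I would insert the pointwise bounds of Lemma~\ref{lem:tedke} and carry out the dyadic sum in $N$, splitting at the two natural thresholds $N\sim 1/(|x|\vee|y|)$ and $N\sim 1/|x-y|$. The Case~4 Gaussian contribution saturates at the upper threshold (using $s+d-2>0$, automatic since $d\geq 3$ and $s>0$); for $a<0$ the Case~1 contribution saturates at $1/(|x|\vee|y|)$, and Cases~2--3, with their rapid-decay factor $(N|\cdot|)^{-M}$, telescope onto the same scale. The expected bound is
\begin{equation*}
\mathcal K(x,y)\lesssim|y|^s|x-y|^{-(d+s-2)}(|x|+|y|)^{-2},
\end{equation*}
together with an additional ``potential'' piece $|y|^s(|x|\wedge|y|)^{-\sigma}(|x|\vee|y|)^{-(d+s-\sigma)}$ appearing only when $a<0$. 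For each piece I would apply Lemma~\ref{lem:schur} with the weight $w(x,y):=(|x|/|y|)^\alpha$. The Gaussian piece is handled for any $\alpha\in(0,\min\{p(d+s),p'd\})$; e.g.\ $\alpha=1$ suffices, using $s<2$ to secure local integrability near $y=x$. After splitting the two cases $|x|\leq|y|$ and $|x|>|y|$ and evaluating the four resulting elementary integrals, the potential piece reduces the two Schur conditions to
\begin{equation*}
\max\{p\sigma,\,p'(\sigma-s)\vee 0\}<\alpha<\min\{p(d+s-\sigma),\,p'(d-\sigma)\},
\end{equation*}
and existence of a valid $\alpha$ rearranges precisely to $p<d/\sigma$ together with $p>d/(d+s-\sigma)$, matching the stated hypothesis $\max\{1,d/(d+s-\sigma)\}<p<d/\sigma$.

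I expect the main obstacle to be the bookkeeping in the $\ell^2(\tz)$ summation: one must verify that, across all configurations of $(|x|,|y|,N^{-1})$, the piecewise bounds of Lemma~\ref{lem:tedke} paste together so that each $N$ contributes only to its dominant scale, and that the transitional regions in Cases~2--3 (with their free parameter $M$) introduce no genuinely new behavior. Once this is settled, the remaining weighted Schur calculation is routine, and its range of valid $\alpha$ collapses onto the hypothesis on $p$ exactly.
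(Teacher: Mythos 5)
Your proposal is correct and follows essentially the same route as the paper: reduce to a single integral kernel via Minkowski's inequality (the paper passes through $\ell^1\hookrightarrow\ell^2$ to get $\sum_N N^s|K_N|$ rather than the $\ell^2$ sum you keep, but for these dyadic bounds the two are comparable), then bound the dyadic sum using Lemma~\ref{lem:tedke} and apply Schur's test with the power weight $(|x|/|y|)^\alpha$. Your compact ``Gaussian plus potential'' description of the summed kernel is equivalent to the paper's case splitting, and your Schur-condition computation recovers the stated range of $p$ exactly as the paper's does.
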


\begin{proof}
By the triangle inequality and the embedding $\ell^1\hookrightarrow \ell^2$, we have
\begin{align*}
\text{LHS\eqref{sfd}}
&\lesssim \biggl\|\biggl(\sum_{N\in 2^{\Z}}N^{2s}\bigl|\bigl[\tilde P_N-\tilde P_N^a\bigr]f\bigr|^2\biggr)^{\frac12}\biggr\|_{L^p(\R^d)}\\
&\lesssim \biggl\|\biggl(\sum_{N\in 2^{\Z}}N^{2s}\biggl|\int K_N(x,y)f(y)\,dy\biggr|^2\biggr)^{\frac 12}\biggr\|_{L^p(\R^d)}\\
&\lesssim \biggl\|\int\ \biggl(\sum_{N\in 2^{\Z}} N^s |K_N(x,y)|\biggr)\ |f(y)|\,dy\biggr\|_{L^p(\R^d)}.
\end{align*}
where $K_N$ is as in Lemma~\ref{lem:tedke}.

\noindent {\bf Case 1: $a\geq0$.}  We decompose the sum into the contribution of high frequencies $N\geq (|x|+|y|)^{-1}$ and that of low frequencies $N\leq (|x|+|y|)^{-1}$.  We first discuss the contribution of high frequencies.

For $N\geq (|x|+|y|)^{-1}$, using Lemma~\ref{lem:tedke} we find that for any $M,L>0$,
\begin{equation*}
|K_N(x,y)|\lesssim N^d\begin{cases}
 \big[N(|x|+|y|)\big]^{-M},\qquad\qquad &|x|\geq  2|y| \text{ or } |y|\geq2|x|,\\
 \big[N(|x|+|y|)\big]^{-2}\big[N|x-y|\big]^{-L}, &|x|\sim|y|.
\end{cases}
\end{equation*}
Choosing $M>d+s$ and $d+s-2<L<d$ (which is possible for $s<2$), we estimate the contribution of high frequencies as follows:
\begin{align*}
&\biggl\|\int\sum_{N\geq(|x|+|y|)^{-1}} N^s |K_N(x,y)| |f(y)|\,dy\biggr\|_{L^p}\\
&\leq\biggl\|\int_{2|y|\leq|x|}\sum_{N\geq(|x|+|y|)^{-1}}\frac{N^{d+s}}{\big[N(|x|+|y|)\big]^M}|f(y)|\,dy\biggr\|_{L^p}\\
&\quad+\biggl\|\int_{2|x|\leq|y|}\sum_{N\geq(|x|+|y|)^{-1}}\frac{N^{d+s}}{\big[N(|x|+|y|)\big]^M} |f(y)|\,dy\biggr\|_{L^p}\\
&\quad +\biggl\|\int_{|x|\sim |y|}\sum_{N\geq(|x|+|y|)^{-1}}\frac{N^{d+s}}{\big[N(|x|+|y|)\big]^2\big[N|x-y|\big]^L}|f(y)|\,dy\biggr\|_{L^p}\\
&\lesssim\biggl\|\int_{2|y|\leq|x|}\frac{|y|^{s}}{(|x|+|y|)^{d+s}}\frac{|f(y)|}{|y|^s}\,dy\biggr\|_{L^p}+\biggl\|\int_{2|x|\leq|y|}\frac{|y|^{s}}{(|x|+|y|)^{d+s}}\frac{|f(y)|}{|y|^s}\,dy\biggr\|_{L^p}\\
&\quad+\biggl\|\int_{|x|\sim |y|}\frac{|y|^{s}}{(|x|+|y|)^{d+s-L}|x-y|^L}\frac{|f(y)|}{|y|^s}\,dy\biggr\|_{L^p}\\
&\triangleq I_1+I_2+I_3.
\end{align*}

Using Lemma~\ref{lem:schur}, we will estimate these three integrals below in terms of RHS\eqref{sfd}.  It turns out that these three integrals also control the contribution from the low frequencies.  Indeed, for $N\leq (|x|+|y|)^{-1}$, Lemma~\ref{lem:tedke} guarantees that $|K_N(x,y)|\lesssim N^d$ and consequently,
\begin{align*}
\biggl\|\int\biggl(\sum_{N\leq(|x|+|y|)^{-1}} \!\!N^s|K_N(x,y)|\biggr)|f(y)|\,dy\biggr\|_{L^p}
&\lesssim \biggl\|\int \frac{ |y|^s}{(|x|+|y|)^{d+s}}\frac{|f(y)|}{|y|^s}\,dy\biggr\|_{L^p}\\
&\lesssim I_1+I_2+I_3.
\end{align*}

We first consider the contribution of $I_1$.  On the corresponding region of integration, the kernel becomes
$$
K(x,y)=\frac{|y|^{s}}{(|x|+|y|)^{d+s}}\sim|y|^{s}|x|^{-(d+s)}.
$$
As $s>0$, a simple computation shows that
$$
\sup_{x\in \R^d}\int_{2|y|\leq|x|}K(x,y)\, dy+\sup_{y\in \R^d}\int_{2|y|\leq |x|} K(x,y)\,dx\lesssim1.
$$
Thus, Schur's test shows the contribution of $I_1$ is acceptable.

We now consider the contribution of $I_2$.  On the corresponding region of integration, the kernel takes the form
$$
K(x,y)=\frac{|y|^{s}}{(|x|+|y|)^{d+s}}\sim|y|^{-d}.
$$
We will apply Lemma~\ref{lem:schur} with weight given by
$$
w(x,y):=\big(\tfrac{|x|}{|y|}\big)^\alpha \qtq{for} 0<\alpha<p'd.
$$
That hypothesis \eqref{est:scha1} is satisfied in this setting follows from
$$
\int_{2|x|\leq|y|}w(x,y)^\frac1p\big|K(x,y)\big|dy\lesssim|x|^{\frac{\alpha}p}\int_{2|x|\leq|y|}|y|^{-d-\frac{\alpha}p}dy\lesssim1,
$$
while hypothesis \eqref{est:scha2} follows from
\begin{align*}
\int_{2|x|\leq|y|}w(x,y)^{-\frac1{p'}}\big|K(x,y)\big|dx\lesssim&|y|^{-d+\frac{\alpha}{p'}}\int_{2|x|\leq|y|}|x|^{-\frac{\alpha}{p'}}dx \lesssim1.
\end{align*}
Thus, Lemma~\ref{lem:schur} shows that the contribution of $I_2$ is acceptable.

Finally, we consider the contribution of $I_3$.  On the corresponding region of integration, the kernel becomes
$$
K(x,y)=\frac{|y|^{s}}{(|x|+|y|)^{d+s-L}|x-y|^L}\sim|x|^{-d+L}|x-y|^{-L}\sim|y|^{-d+L}|x-y|^{-L}.
$$
Recalling that $L<d$, we estimate
\begin{align*}
&\sup_{x\in \R^d}\int_{|x|\sim|y|}K(x,y)\, dy+\sup_{y\in \R^d}\int_{|x|\sim|y|} K(x,y)\,dx\\
&\lesssim\sup_{x\in \R^d} |x|^{-d+L} \int_{|x-y|\lesssim |x|} |x-y|^{-L}\, dy+\sup_{y\in \R^d} |y|^{-d+L} \int_{|x-y|\lesssim |y|} |x-y|^{-L}\, dx \lesssim1.
\end{align*}
Thus, Schur's test ensures that the contribution of $I_3$ is acceptable.

This completes the proof of the proposition in the case $a\geq 0$.

\medskip

\noindent{\bf Case 2: $-(\frac{d-2}2)^2\leq a<0$.}  As before, we decompose the sum into the contribution of high frequencies $N\geq (|x|\vee|y|)^{-1}$ and that of low frequencies
$N\leq (|x|\vee|y|)^{-1}$.

We consider first the low frequencies.  Using Lemma \ref{lem:tedke} and the fact that $d+s\geq 2\sigma$, we estimate
\begin{align*}
&\biggl\|\int_{\R^d}\sum_{N\leq(|x|\vee|y|)^{-1}} N^s |K_N(x,y)| |f(y)|\,dy\biggr\|_{L^p}\\
&\lesssim\biggl\|\int_{\R^d}\!\!\sum_{N\leq(|x|\vee|y|)^{-1}}\!\!\frac{N^{d+s-2\sigma}}{|x|^\sigma|y|^\sigma}|f(y)|\,dy\biggr\|_{L^p}
\lesssim\biggl\|\int_{\R^d}\frac{|y|^{s-\sigma}}{(|x|+|y|)^{d+s-2\sigma}|x|^\sigma}\frac{|f(y)|}{|y|^s}\,dy\biggr\|_{L^p}\\
&\lesssim \biggl\|\int_{2|y|\leq |x|}\frac{|y|^{s-\sigma}}{|x|^{d+s-\sigma}}\frac{|f(y)|}{|y|^s}\,dy\biggr\|_{L^p}
    +\biggl\|\int_{2|x|\leq |y|}\frac{1}{|y|^{d-\sigma}|x|^{\sigma}}\frac{|f(y)|}{|y|^s}\,dy\biggr\|_{L^p} \\
&\quad + \biggl\|\int_{|x|\sim |y|}\frac{1}{|x|^d}\frac{|f(y)|}{|y|^s}\,dy\biggr\|_{L^p} =: I\!I_1 + I\!I_2 + I\!I_3.
\end{align*}

Before turning our attention to the estimation of these three integrals, let us pause to consider the high frequency contribution. To this end, with $x$ fixed, we divide the integral in the $y$ variable into three regions: $\{2|y|\leq |x|\}$, $\{2|x|\leq |y|\}$, and $\{ |x|/2\leq |y|\leq 2|x|\}$.

If $2|y|\leq |x|$, then choosing $M>d+s-\sigma$, Lemma~\ref{lem:tedke} yields
\begin{align*}
\sum_{N\geq(|x|\vee|y|)^{-1}} N^s |K_N(x,y)| &\lesssim \sum_{(2|y|)^{-1}\geq N\geq(|x|\vee|y|)^{-1}}N^{d+s}(N|y|)^{-\sigma} (N|x|)^{-M} \\
&\qquad + \sum_{N\geq(2|y|)^{-1}\geq(|x|\vee|y|)^{-1}}N^{d+s}(N|x|)^{-M-\sigma} \\
&\lesssim |y|^{-\sigma} |x|^{\sigma-s-d}.
\end{align*}
The contribution of this region is therefore bounded by $I\!I_1$.

Arguing similarly, we see that when $2|x|\leq |y|$ and $M>d+s-\sigma$,
\begin{align*}
\sum_{N\geq(|x|\vee|y|)^{-1}} N^s |K_N(x,y)| &\lesssim \sum_{(2|x|)^{-1}\geq N\geq(|x|\vee|y|)^{-1}} N^{d+s}(N|x|)^{-\sigma}(N|y|)^{-M} \\
&\qquad + \sum_{N\geq(2|x|)^{-1}\geq(|x|\vee|y|)^{-1}}N^{d+s}(N|y|)^{-M-\sigma},  \\
&\lesssim |x|^{-\sigma} |y|^{\sigma-s-d}.
\end{align*}
whose contribution is controlled by $I\!I_2$.

The third and final high-frequency contribution comes from the region where $|x|/2\leq |y|\leq 2|x|$.  Choosing $d+s-2<L<d$, on this region we estimate
\begin{align*}
\sum_{N\geq(|x|\vee|y|)^{-1}} N^s |K_N(x,y)|
    &\lesssim \sum_{N\geq(|x|\vee|y|)^{-1}} \frac{N^{d+s}}{\big[N(|x|+|y|)\big]^2\big[N(|x-y|)\big]^L} \\
&\lesssim (|x|+|y|)^{L-d-s}|x-y|^{-L}.
\end{align*}
Our earlier analysis of $I_3$ then shows that the resulting contribution is acceptable.

To complete the proof of Proposition~\ref{prop:lpdiffer} it remains only to estimate the three integrals $I\!I_1$, $I\!I_2$, and $I\!I_3$.
We begin with $I\!I_1$, applying Lemma~\ref{lem:schur} with weight
$$
w(x,y)=\big(\tfrac{|x|}{|y|}\big)^\alpha \qtq{with} (\sigma-s)p'<\alpha<p(d+s-\sigma).
$$
That hypothesis \eqref{est:scha1} is satisfied in this setting follows from
$$
\int_{2|y|\leq|x|}w(x,y)^\frac1p\frac{|y|^{s-\sigma}}{|x|^{d+s-\sigma}}\,dy\lesssim|x|^{-d-s+\sigma+\frac{\alpha}p}\int_{2|y|\leq|x|}|y|^{s-\sigma-\frac{\alpha}p} \,dy\lesssim1,
$$
while hypothesis \eqref{est:scha2} follows from
$$
\int_{2|y|\leq|x|}w(x,y)^{-\frac1{p'}}\frac{|y|^{s-\sigma}}{|x|^{d+s-\sigma}}\,dx\lesssim|y|^{s-\sigma+\frac{\alpha}{p'}}\int_{2|y|\leq|x|}|x|^{-(d+s-\sigma+\frac{\alpha}{p'})} \,dx\lesssim1.
$$
Thus, Lemma~\ref{lem:schur} shows that the contribution of $I\!I_1$ is acceptable.

We now consider the contribution of $I\!I_2$.  We use Lemma~\ref{lem:schur} with weight
$$
w(x,y)=\big(\tfrac{|x|}{|y|}\big)^\alpha \qtq{with} \sigma p<\alpha<p'(d-\sigma).
$$
That hypothesis \eqref{est:scha1} is satisfied in this setting follows from
$$
\int_{2|x|\leq|y|}w(x,y)^\frac1p\frac{1}{|y|^{d-\sigma}|x|^\sigma}\,dy\lesssim|x|^{-\sigma+\frac{\alpha}p}\int_{2|x|\leq|y|}|y|^{-d-\frac{\alpha}p+\sigma} \,dy\lesssim1,
$$
while hypothesis \eqref{est:scha2} is derived from
\begin{align*}
\int_{2|x|\leq|y|}w(x,y)^{-\frac1{p'}}\frac{1}{|y|^{d-\sigma}|x|^\sigma}\,dx\lesssim&|y|^{-d+\sigma+\frac{\alpha}{p'}}\int_{2|x|\leq|y|}|x|^{-\sigma-\frac{\alpha}{p'}}\,dx \lesssim1.
\end{align*}
Thus by Lemma~\ref{lem:schur}, the contribution of this term to RHS\eqref{sfd} is acceptable.

It remains only to control the contribution of $I\!I_3$. Noting that
$$
\sup_{x\in \R^d}\int_{|x|\sim|y|} \frac{1}{|x|^d}\, dy+\sup_{x\in \R^d}\int_{|x|\sim|y|}\frac{1}{|x|^d}\, dx\lesssim1,
$$
Schur's Lemma ensures that this integral is acceptable.

This completes the proof of the proposition in the case $-(\frac{d-2}2)^2\leq a<0$.
\end{proof}

We now have all the necessary ingredients to complete the proof of Theorem~\ref{thm:equivsobolev}.

\begin{proof}[Proof of Theorem~\ref{thm:equivsobolev}] Fix $d\geq 3$, $a\geq -\big(\tfrac{d-2}2\big)^2$, and $0<s<2$. Assume $\frac{s+\sigma}d<\frac1p<\min\{1, \frac{d-\sigma}d\}$. Using the triangle inequality, Theorem~\ref{T:sq}, Proposition~\ref{prop:lpdiffer}, and Proposition~\ref{P:hardy}, for $f\in C_c^\infty(\R^d)$ we estimate
\begin{align*}
\||\nabla|^s f\|_p
&\sim \biggl\|\biggl(\sum_{N\in 2^{\Z}} N^{2s}\bigl|\tilde P_N f\bigr|^2\biggr)^{\frac 12}\biggr\|_p\\
&\lesssim \biggl\|\biggl(\sum_{N\in 2^{\Z}}N^{2s}\bigl|\tilde P_N^a f\bigr|^2\biggr)^{\frac12}\biggr\|_p
    +\biggl\|\biggl(\sum_{N\in 2^{\Z}}N^{2s}\bigl|\bigl[\tilde P_N-\tilde P_N^a\bigr] f\bigr|^2\biggr)^{\frac 12}\biggr\|_p\\
&\lesssim \|\mathcal L_a^{\frac s2} f\|_p+\biggl\|\frac{f}{|x|^s}\biggr\|_p\lesssim \|\mathcal L_a^{\frac s2} f\|_p.
\end{align*}
Arguing similarly and using Lemma~\ref{lem:classhardy} in place of Proposition~\ref{P:hardy}, for $\max\{\frac sd, \frac\sigma d\}<\frac1p<\min\{1, \frac{d-\sigma}d\}$ we estimate
\begin{align*}
\|\mathcal L_a^{\frac s2} f\|_p
&\sim \biggl\|\biggl(\sum_{N\in2^{\Z}}N^{2s}\bigl|\tilde P_N^a f\bigr|^2\biggr)^{\frac 12}\biggr\|_p\\
&\lesssim \biggl\|\biggl(\sum_{N\in 2^{\Z}}N^{2s}\bigl|\tilde P_Nf\bigr|^2\biggr)^{\frac 12}\biggr\|_p
    +\biggl\|\biggl(\sum_{N\in 2^{\Z}} N^{2s}\bigl|\bigl[\tilde P_N-\tilde P_N^a\bigr] f\bigr|^2\biggr)^{\frac 12}\biggr\|_p\\
&\lesssim \||\nabla|^s f\|_p+\biggl\|\frac{f}{|x|^s}\biggr\|_p\lesssim \||\nabla|^s f\|_p,
\end{align*}

This completes the proof of the theorem.
\end{proof}

\begin{center}

\end{center}

\end{document}